\long\def\symbolfootnote[#1]#2{\begingroup%
\def\thefootnote{\fnsymbol{footnote}}\footnote[#1]{#2}\endgroup}
\newtheorem{theorem}{Theorem}[section]
\newtheorem{lemma}[theorem]{Lemma}
\newtheorem{prop}[theorem]{Proposition}
\newtheorem{cor}[theorem]{Corollary}
\theoremstyle{definition}
\newtheorem{rem}[theorem]{Remark}
\newtheorem{defin}[theorem]{Definition}
\newtheorem{example}[theorem]{Examples}
\newtheorem{conj}[theorem]{Conjecture}
\renewcommand{\proof}{\medskip\par\noindent\textbf{Proof.} \ignorespaces}
\renewcommand{\qed}{\quad\hskip0pt\null\hfill$\square$\par}
\newcommand {\e}{\varepsilon}
\newcommand {\R} {{\mathbb{R}}}
\newcommand {\N} {{\mathbb{N}}}
\newcommand {\Z} {{\mathbb{Z}}}
\newcommand {\Tau} {\mathrm{T}}
\newcommand {\Q} {{\mathbb{Q}}}
\newcommand {\Sp} {S_{0,5}}
\newcommand {\PML} {\mathcal{PML}}
\newcommand {\AML} {\mathcal{AML}}
\newcommand {\EL} {\mathcal{EL}}
\renewcommand {\S} {\mathcal{S}}
\newcommand {\PEML}{\mathcal{MPML}}
\newcommand {\MPML}{\mathcal{MPML}}
\begin{document}

\begin{center}
\large\bfseries The ending lamination space of the five-punctured
sphere is the N\"obeling curve
\end{center}

\begin{center}\bf
Sebastian Hensel$^a$\symbolfootnote[1]{Supported by the Max Planck Institute for Mathematics in Bonn.} \& Piotr Przytycki$^b$\symbolfootnote[2]{Partially supported by MNiSW
grant N201 012 32/0718 and the Foundation for Polish Science.
}
\end{center}

\begin{center}\it
$^a$ Mathematisches Institut, Universit\"at Bonn,\\
 Endenicher Allee 60, 53115 Bonn, Germany\\
\emph{e-mail:}\texttt{loplop@math.uni-bonn.de}
\end{center}

\begin{center}\it
$^b$ Institute of Mathematics, Polish Academy of Sciences,\\
 \'Sniadeckich 8, 00-956 Warsaw, Poland\\
\emph{e-mail:}\texttt{pprzytyc@mimuw.edu.pl}
\end{center}

\begin{abstract}
\noindent We prove that the ending lamination space of the
five-punctured sphere is homeomorphic to the N\"obeling curve.
\end{abstract}



\section{Introduction}
Let $S_{g,p}$ be the orientable surface of genus $g$ with $p$
punctures. The set of essential simple closed curves on the surface
$S_{g,p}$ can be arranged into the curve complex
$\mathcal{C}=\mathcal{C}(S_{g,p})$. The combinatorics of this
intricate object is used in particular to study the mapping class
group of $S_{g,p}$. The curve complex is particularly useful in view
of the result of Masur and Minsky \cite[Theorem 1.1]{MM} which says
that $\mathcal{C}$ is hyperbolic in the sense of Gromov. Hence,
using the Gromov product (see \cite[Chapter III.H]{BH}), one can
define the Gromov boundary $\partial \mathcal{C}$ in the usual way.
Note that the boundary $\partial \mathcal{C}$ is in general not
compact, since the curve complex $\mathcal{C}$ is not locally
finite. Hence the topology of $\partial \mathcal{C}$ can be a rich
subject to explore.

Interestingly, the boundary $\partial \mathcal{C}$ arises naturally
also from another construction. Namely, Klarreich \cite[Theorem
1.3]{Kla} (see also \cite[Section 1]{Ham}) proved that $\partial
\mathcal{C}$ is homeomorphic, by an explicit homeomorphism, to the
ending lamination space $\EL=\EL(S_{g,p})$. To define this space, we
need to recall some standard notions.

We denote the space of geodesic laminations on $S_{g,p}$ as usual by
$\mathcal{L}=\mathcal{L}(S_{g,p})$. An \emph{ending lamination} is a
minimal filling geodesic lamination. The set of all ending
laminations is denoted by $\EL \subset \mathcal{L}$. It remains to
describe the topology on the set $\EL$. Let $\PML$ denote the space
of projective measured laminations. Let $\phi\colon \PML \rightarrow
\mathcal{L}$ be the measure forgetting map which maps a projective
measured lamination to its support. Note that $\phi$ is not
surjective, because there are geodesic laminations which do not
admit a transverse measure of full support. However, every ending
lamination admits a transverse measure of full support, so $\EL$ is
contained in the image of $\phi$. Let $\PEML\subset \PML$ be the
preimage of $\EL$ under $\phi$. The space $\EL$ is naturally
equipped with the topology induced from $\PEML$ by the quotient map
$\phi$.

Here is a short account on what is known about the ending lamination
space $\EL$ depending on $g$ and $p$. We call $\xi = 3g-3+p$ the
\emph{complexity} of the surface. The cases where the complexity is
at most one are easy and well-known, in particular for $\xi=1$
(i.e.\ in the case where the surface is the four-punctured sphere or
the once-punctured torus) we have $\EL\simeq\R\setminus \Q$. Assume
now $\xi>1$, i.e.\ the surface in question is hyperbolic and
non-exceptional. In this case Gabai \cite{G} showed that $\EL$ is
connected, locally path-connected, and cyclic, which concluded a
series of results in this direction. Previously Leininger--Schleimer
\cite{LS} proved that $\EL$ is connected, provided $g\geq 4$, or
$g\geq 2$ and $p\geq 1$. Moreover, Leininger--Mj--Schleimer
\cite{LMS} proved that if $g\geq 2$ and $p=1$, then $\EL$ is locally
path connected.

In spite of the fact that so little about $\EL$ is known so far, we
were encouraged by Mladen Bestvina to address the following.

\begin{conj}
\label{conjecture} The ending lamination space of $S_{g,p}$ is
homeomorphic to the $(\xi-1)$--dimensional N\"obeling space.
\end{conj}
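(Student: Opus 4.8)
\medskip\noindent\textbf{A strategy for Conjecture~\ref{conjecture}.}
Write $n=\xi-1$ and let $N^{n}$ denote the $n$--dimensional N\"obeling space. The plan is to verify that $\EL=\EL(S_{g,p})$ satisfies the hypotheses of the topological characterization of $N^{n}$ furnished by the solution of the N\"obeling manifold characterization conjecture (Ageev; Levin; Nag\'orko): a space is homeomorphic to $N^{n}$ exactly when it is Polish, $n$--dimensional, an absolute extensor in dimension $n$ --- equivalently, for such spaces, $(n-1)$--connected ($\mathrm{C}^{n-1}$) and locally $(n-1)$--connected ($\mathrm{LC}^{n-1}$) --- and has the locally finite $n$--discrete approximation property. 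For $\xi=2$, i.e.\ $n=1$, these conditions reduce to the Kawamura--Levin--Tymchatyn characterization of the N\"obeling curve used in the body of this paper (Polish, $1$--dimensional, connected, locally connected, plus a one--dimensional discrete approximation property), so Conjecture~\ref{conjecture} asserts that every ingredient of the $\xi=2$ argument has a $\xi$--dependent analogue valid in all dimensions $\le n$.

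\smallskip\noindent\emph{Polishness and dimension.}
By Klarreich's theorem $\EL$ is homeomorphic to the Gromov boundary $\partial\mathcal{C}$ of the hyperbolic, separable complex $\mathcal{C}$, hence is separable and completely metrizable (equivalently, $\MPML$ is a $G_{\delta}$ in $\PML$ and this structure descends through $\phi$). One then has to prove $\dim\EL=n$. For the upper bound one covers $\EL$ by the carrying neighbourhoods of maximal birecurrent train tracks, each parametrised by the finitely many vertices of its projectivised cone of transverse measures; passing to a nested exhaustion obtained by splitting --- a process of uniformly bounded branching complexity --- bounds the order of the refined covers in terms of $\xi$, giving $\dim\EL\le n$. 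The reverse inequality $\dim\EL\ge n$ follows once the next step is available, since no nontrivial element of $\pi_{n-1}$ can be supported in a space of dimension $<n$; alternatively one embeds an $n$--cell directly from an $n$--parameter family of pinching sequences.

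\smallskip\noindent\emph{Higher connectivity and local connectivity.}
This is the higher--dimensional upgrade of Gabai's theorem that, for $\xi>1$, $\EL$ is connected and locally path--connected (the $n=1$ case; his further ``cyclic'' conclusion feeds the one--dimensional approximation step). One must show that every map $S^{k}\to\EL$ with $k\le n-1$ is nullhomotopic, and the local analogue. The mechanism should be a controlled filling scheme: approximate the map by a PL map into a finite subcomplex of a combinatorial model --- a ``complex of train tracks'' whose ideal points are the ending laminations --- subdivide, and fill cell by cell, using that the projectivised polytope of transverse measures on a maximal generic train track is a simplex, hence contractible, so that obstructions in dimensions $\le n-1$ die after refinement and patching across charts. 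The quantitative drop from $\PML\cong S^{2\xi-1}$ to the $n$--dimensional quotient $\EL$, forced by forgetting the measure while remaining filling, is exactly what must be controlled here.

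\smallskip\noindent\emph{Discrete approximation, and conclusion.}
Lastly one shows that every map from a polyhedron of dimension $\le n$ into $\EL$ can be approximated, arbitrarily closely and in a locally finite fashion, by maps whose images are in $n$--dimensional general position: inside each train track chart one perturbs so that the images of top cells meet the ``walls'' --- loci where a track ceases to be maximal or where a splitting decision flips --- transversally, becoming $Z$--embeddings off a set of controlled size. The main obstacle is precisely this step together with the preceding one: both require a robust higher--dimensional general--position calculus for families of measured laminations and train track splittings, whereas the available technology, including that of the present paper, is essentially one--dimensional (joining two laminations by a path; transversality of a single arc). Constructing a coherent atlas on $\EL$ with charts modelled on open subsets of $N^{n}$ in which splittings act piecewise--linearly is the crux. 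Granting all of this, the characterization theorem yields $\EL(S_{g,p})\cong N^{\xi-1}$, which is Conjecture~\ref{conjecture}.
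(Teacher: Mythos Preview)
The statement you address is Conjecture~\ref{conjecture}, which the paper explicitly leaves open; only the case $\xi=2$ is established (Theorem~\ref{main}). There is therefore no proof in the paper to compare against, and your proposal is, appropriately, labelled a \emph{strategy} rather than a proof.

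Your outline matches the paper's own framework for the general case: Corollary~\ref{characterization_easy multidim} is exactly the characterisation you invoke, and the proof of Theorem~\ref{main} verifies, for $\xi=2$, each of the four ingredients you list (Polish, correct dimension, connectivity and local path--connectedness, locally finite $1$--discs property). You also correctly isolate the two genuine obstacles---local $k$--connectedness of $\EL$ for all $k\le \xi-2$, and the locally finite $(\xi-1)$--discs property---and honestly flag that the available technology is essentially one--dimensional. These are indeed open: Gabai's theorem gives only connectedness and local path--connectedness, and the train--track partition machinery of Sections~\ref{section The ending lamination space}--\ref{section Universality} is tailored to $2$--dimensional faces of $3$--dimensional polyhedra in $\PML(\Sp)$, resting on the Euler--characteristic count of Lemma~\ref{key lemma}.

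Two points where your sketch is looser than it should be. First, your lower dimension bound (``no nontrivial element of $\pi_{n-1}$ can be supported in a space of dimension $<n$'') collapses as written, since your very next step asserts $\pi_{n-1}(\EL)=0$. A workable route is that strong universality in dimension $n$, once established via Theorem~\ref{Bowers}, yields closed embeddings of $I^n$ into $\EL$, whence $\dim\EL\ge n$; for $\xi=2$ the paper simply cites the existence of a nontrivial path. Second, your upper bound (``bounds the order of the refined covers in terms of $\xi$'') is much vaguer than what is actually needed: the paper's argument that $\dim\EL(\Sp)\le 1$ depends on the specific combinatorics of Lemma~\ref{key lemma}, namely that proper subtracks of nearly complete tracks on $\Sp$ are never filling, and the correct higher--$\xi$ analogue is not a matter of bookkeeping but a genuine structural statement about which subtracks can carry ending laminations.
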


\begin{defin}
\label{definition noebeling} The \emph{$m$--dimensional N\"obeling
space} $N^{2m+1}_m$ is the topological space obtained from
$\R^{2m+1}$ by removing all points with at least $m+1$ rational
coordinates.
\end{defin}

In this terminology, the ending lamination space of the
four-punctured sphere (and the once-punctured torus) is homeomorphic
to the $0$--dimensional N\"obeling space. This agrees with
Conjecture~\ref{conjecture}.

The \emph{N\"obeling curve} is the $1$--dimensional N\"obeling
space, i.e.\ the topological space obtained from $\R^3$ by removing
all points with at least two rational coordinates. The main result
of this article is to confirm Conjecture~\ref{conjecture} in the
following case.

\begin{theorem}
\label{main} The ending lamination space of the five-punctured
sphere is homeomorphic to the N\"obeling curve.
\end{theorem}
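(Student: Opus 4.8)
The plan is to apply a topological characterization of the Nöbeling curve. By recent work (Ageev, Levin, Nagórko) the $1$-dimensional Nöbeling space is characterized among Polish spaces as the unique one that is $1$-dimensional, connected, locally path-connected, completely metrizable, separable, and satisfies a higher-dimensional analogue of the defining properties of the Menger curve — most conveniently, the \emph{locally finite $1$-discrete approximation property} (or equivalently, being a $1$-dimensional AR-like space with the appropriate general position / unknotting property). So the proof reduces to verifying these axioms for $\EL(\Sp)$. Several of them come essentially for free from the results quoted in the introduction: $\EL(\Sp)$ is separable and Polish (it is a quotient of a Borel subset of the compact metrizable space $\PML$, and one checks it is in fact completely metrizable), and by Gabai's theorem it is connected and locally path-connected; the Klarreich identification with $\partial\mathcal{C}(\Sp)$ can be used to transport combinatorial information about the curve complex into the picture. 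The complexity of $\Sp$ is $\xi = 3\cdot 0 - 3 + 5 = 2$, so the target dimension is $\xi - 1 = 1$, consistent with the statement.

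The first substantive step is to establish that $\EL(\Sp)$ is exactly $1$-dimensional: one needs $\dim \EL \le 1$ (an upper bound, presumably via an explicit countable cover by sets whose boundaries are $0$-dimensional — here one would use train track coordinates on $\PML$ and the structure of the forgetful map $\phi$, showing that the "rational" directions one quotients out drop the dimension appropriately) together with $\dim \EL \ge 1$ (which follows from connectedness and non-triviality, since a connected space with more than one point has dimension at least $1$). The second, and I expect the hardest, step is the \textbf{general position / discrete approximation property}: given a map $f$ from a finite polyhedron (of dimension $\le 1$) into $\EL$ and $\e > 0$, one must $\e$-approximate $f$ by a map whose image avoids a prescribed $Z$-set, or more precisely verify the locally finite $1$-discrete approximation property. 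Concretely this means: any two $\e$-close maps of $1$-polyhedra into $\EL$ can be pushed off each other, and maps of $1$-polyhedra can be approximated by "$1$-dimensional" maps. This is where the surface topology must do real work — one would build the approximations by hand using the combinatorics of train tracks carrying the laminations, subdividing and perturbing within the chart structure coming from $\PML$, and controlling everything with the Masur–Minsky hyperbolicity of $\mathcal{C}$ to guarantee the perturbed maps still land in $\EL$ rather than in the complement of $\phi(\PEML)$.

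I expect the bulk of the paper is therefore the construction of good coordinate systems on $\EL(\Sp)$: a combinatorial model in which small neighborhoods of an ending lamination are described by train track splitting sequences, so that both the dimension count and the general position arguments become manageable combinatorial statements about how such sequences can be altered. The five-punctured sphere is small enough ($\xi = 2$) that these train track moves can be enumerated fairly explicitly, which is presumably why this case is tractable while the general conjecture is not. Once the characterization axioms are all in place, the identification with $N^3_1$ is immediate from the uniqueness theorem, and no explicit homeomorphism need be constructed.
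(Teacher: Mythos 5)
Your plan matches the paper's strategy almost exactly: both proceed by verifying a topological characterization of the N\"obeling curve (Polish, connected, locally path-connected, $1$-dimensional, plus the locally finite $1$-discs property), obtaining separability and completeness from the Klarreich identification of $\EL$ with the Gromov boundary of $\mathcal{C}$, connectedness and local path-connectedness from Gabai, and the dimension bound from a train track coordinate system on $\PML$ (a nested sequence of ``train track partitions'' whose projections to $\EL$ give a neighbourhood basis with boundaries covered by lower-dimensional strata). The intuition that the five-punctured sphere is tractable because the train track combinatorics can be controlled explicitly is also correct.

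The one substantive gap is your proposed mechanism for the discs property. You suggest ``controlling everything with the Masur--Minsky hyperbolicity of $\mathcal{C}$ to guarantee the perturbed maps still land in $\EL$,'' but hyperbolicity plays no direct role in that step. The actual engine is Gabai's \emph{almost filling lamination} machinery: one first approximates a given path $f_n$ in $\EL$ by a PL almost filling path $h$ in $\PML$ (Gabai's Lemma~2.9), routes $h$ through a shrinking neighbourhood $N_{1/n}(\Gamma)$ of a closed ``attracting grid'' $\Gamma\subset\PML$ that is disjoint from $\MPML$ (built from the complement of the covering charts $V(\tau)$), and then applies Gabai's crucial Lemma~5.1 to push $h$ back into $\EL$. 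The disjointness of $\Gamma$ from $\MPML$, together with the local perturbation lemma relating neighbourhoods in $\PML$ and $\EL$, is what gives local finiteness: the approximating paths $g_n$ are forced to escape any fixed neighbourhood for $n$ large. Without Gabai's Lemma~5.1 there is no way to convert the $\PML$-paths near $\Gamma$ into genuine paths in $\EL$; this is the ingredient your sketch does not supply, and ``hyperbolicity of $\mathcal{C}$'' would not substitute for it.
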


Since $\EL$ is homeomorphic to the Gromov boundary of the curve
complex, which is the same (as a simplicial complex) for the
twice-punctured torus and the five-punctured sphere (see \cite[Lemma
2.1(a)]{Luo}), we have the following.

\begin{cor}
The ending lamination space of the twice-punctured torus is
homeomorphic to the N\"obeling curve.
\end{cor}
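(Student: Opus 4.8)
The plan is to invoke a topological characterisation of the N\"obeling curve and to verify its hypotheses for $X=\EL(S_{0,5})$; the Corollary will then follow by combining with Klarreich's homeomorphism and Luo's lemma. The characterisation I have in mind (Kawamura--Levin--Tymchatyn, or in more recent form Nag\'orko) asserts that $X$ is homeomorphic to the N\"obeling curve precisely when $X$ is a non-empty Polish space that is (i) one-dimensional, (ii) connected and locally path-connected, (iii) nowhere locally compact, and (iv) satisfies a suitable general-position property for maps of finite graphs --- concretely, a disjoint-arcs type property: any two maps of an interval into $X$ can be approximated, relative to a prescribed open cover, by maps with disjoint images (one may need a slightly stronger homotopical version). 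Condition (ii) is exactly the content of Gabai's theorem \cite{G}, which applies because $\xi(S_{0,5})=2>1$, and condition (i) is consistent with the conjectural value $\xi-1=1$.

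First I would dispatch the soft conditions. That $\EL$ is Polish: $\PEML\subset\PML$ is a $G_\delta$ subset (minimality and filling of the support are each $G_\delta$ conditions), hence Polish, and the measure-forgetting map $\phi\colon\PEML\to\EL$ is a closed surjection with compact point-preimages, so $\EL$ with the quotient topology is again separable and completely metrizable; alternatively one quotes that the Gromov boundary of the separable hyperbolic complex $\mathcal{C}$ is Polish and invokes \cite{Kla}. That $\EL$ is nowhere locally compact: inside every neighbourhood of an ending lamination $\mu$ one can exhibit a sequence of ending laminations with no convergent subsequence, escaping to infinity in $\mathcal{C}(S_{0,5})$ --- this is where the non-local-finiteness of the curve complex is used, and it also rules out the (compact) Menger curve as an alternative model. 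The lower bound $\dim\EL\ge 1$ is then immediate from connectedness and $|\EL|>1$.

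The heart of the argument is the upper bound $\dim\EL\le 1$ together with property (iv), and for both I would build an explicit combinatorial model of $\EL(S_{0,5})$. The five-punctured sphere is the simplest non-exceptional surface: every essential curve splits the punctures into groups of sizes $2$ and $3$, and links of vertices in $\mathcal{C}(S_{0,5})$ are Farey graphs (arising from four-holed spheres), so that ending laminations are faithfully coded by infinite nested combinatorial data --- infinite reduced sequences of curves along quasigeodesic rays, together with the Farey branch taken at each step, much as irrationals are coded by continued fractions. From such a coding one reads off, around each point, a neighbourhood basis of open sets together with explicit finite branching patterns; organising refinements along these patterns produces open covers of order $\le 2$ of arbitrarily small mesh (hence $\dim\EL\le 1$), and, more delicately, it lets one reroute a given map $f\colon K\to\EL$ of a finite graph through disjoint branches so as to pull apart overlapping arcs, which is property (iv). The main obstacle, and the technical core of the work, is exactly this rerouting: promoting the soft combinatorics of the curve complex to genuinely continuous families of laminations with controlled support, which forces one to work with train-track charts and to perform careful surgeries on laminations so that the homotopies one writes down are honest and small. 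Once (i)--(iv) are verified the characterisation yields Theorem \ref{main}, and since $\EL(S_{1,2})\simeq\partial\mathcal{C}(S_{1,2})\simeq\partial\mathcal{C}(S_{0,5})\simeq\EL(S_{0,5})$ by \cite{Kla} and \cite[Lemma 2.1(a)]{Luo}, the Corollary follows at once.
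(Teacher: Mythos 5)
Your closing step---passing from Theorem~\ref{main} to the Corollary via Klarreich's homeomorphism $\EL\simeq\partial\mathcal{C}$ and Luo's observation that $\mathcal{C}(S_{1,2})$ and $\mathcal{C}(S_{0,5})$ are isomorphic simplicial complexes (so their Gromov boundaries coincide)---is exactly the paper's proof of the Corollary, and it is correct.

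Everything preceding that sentence in your proposal is a sketch of a proof of Theorem~\ref{main} itself, which the paper's one-line proof of the Corollary simply takes as already established, so for the Corollary the re-derivation is unnecessary. Where you do sketch Theorem~\ref{main}, your route diverges from the paper's in two respects. First, the characterisation the paper actually invokes (Theorem~\ref{characterization_easy}, distilled from \cite{KLT} and \cite{N}) is: Polish, dimension $1$, connected, locally path connected, and the \emph{locally finite $1$--discs property}; it does not list ``nowhere locally compact'' as a separate hypothesis, and your condition (iv) as stated---approximating two maps of an interval by maps with \emph{disjoint} images---is genuinely weaker than the locally finite (or discrete) family requirement that the characterisation needs. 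Second, the paper verifies the dimension bound and the discs property not by a Farey/continued-fraction combinatorial coding but by the train-track-partition machinery of Section~\ref{section The ending lamination space} together with Gabai's PL almost filling paths from Section~\ref{section Almost filling paths}. None of this affects the validity of your deduction of the Corollary, since that deduction uses only the statement, not the proof, of Theorem~\ref{main}.
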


\medskip
The article is organised as follows. In Section~\ref{section The
Nobeling curve} we provide a topological characterisation of the
N\"obeling curve which we use in the proof of Theorem~\ref{main}. In
Section~\ref{section The ending lamination space}, using train
tracks, we choose a convenient neighbourhood basis for the ending
lamination space. Then, in Section~\ref{section Almost filling
paths}, we give an account on Gabai's method for constructing paths
in $\EL$.

The proof of Theorem~\ref{main} splits into two parts. In Section
\ref{section Outline of the proof} we begin the proof and in
particular we obtain a dimension bound. The main part of the proof
is to obtain a universality property, with which we conclude in
Section~\ref{section Universality}.

\medskip

We thank Andrzej Nag\'orko for discussions on the N\"obeling curve
and for the characterisation in Section~\ref{section The Nobeling
curve}; Ursula Hamenst\"adt for discussions on the ending lamination
space and for suggestions on how to improve our preprint; and
Mladen Bestvina and Saul Schleimer for encouragement. This work was
partially carried out during the stay of the second author at the
Hausdorff Institute of Mathematics in Bonn.

\section{The N\"obeling curve}
\label{section The Nobeling curve}

In this section we give a useful characterisation of the N\"obeling
curve following from Kawamura--Levin--Tymchatyn~\cite{KLT}. We
learned about this characterisation and the way to derive it from
\cite{KLT} using a standard topological argument from Andrzej
Nag\'orko.

\begin{theorem}
\label{characterization_easy} If a Polish space of dimension $1$ is
connected, locally path connected and satisfies the locally finite
$1$--discs property, then it is homeomorphic to the N\"obeling
curve.
\end{theorem}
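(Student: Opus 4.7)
The plan is to derive Theorem~\ref{characterization_easy} as a direct consequence of the topological characterization of N\"obeling spaces due to Kawamura--Levin--Tymchatyn~\cite{KLT}. Their result, specialised to dimension one, says that a Polish one-dimensional space is homeomorphic to $N^3_1$ precisely when it satisfies an extension/approximation property in dimension one together with a disjoint-discs approximation property for $1$-cells. The task, therefore, reduces to verifying that the three hypotheses listed in the theorem (connected, locally path connected, locally finite $1$-discs property) imply the formally stronger KLT hypotheses in the Polish, one-dimensional setting.

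The first step is to handle the extension/approximation hypothesis of~\cite{KLT}. For a Polish space of covering dimension one, the classical equivalence between local $(n-1)$-connectedness and the absolute extensor condition $AE(n)$ reduces, in dimension one, to a statement about maps from one-dimensional compacta into locally path connected spaces, which can be extended cell by cell using path connectedness in sufficiently small neighbourhoods. Thus connectedness plus local path connectedness yield the $AE(1)$ (or approximate $AE(1)$) input to the KLT theorem via a standard selection-style argument.

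The second step is to align the locally finite $1$-discs property with the disjoint-discs approximation property used in~\cite{KLT}. Both are statements to the effect that any countable family of maps of $1$-cells into the space admits, arbitrarily close in the sup metric, an approximation whose image family is locally finite (equivalently, in a Polish space, discrete away from a closed nowhere-dense set). Translating one formulation into the other is a standard exercise using paracompactness of Polish spaces and the fact that approximating one map at a time combines to a simultaneous approximation by diagonalisation.

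The main obstacle I anticipate is purely bookkeeping: pinning down the precise form of the KLT characterization that applies, tracking the conventions on what counts as a $1$-disc (compact arc versus open $1$-cell) and on the topology of the mapping space, and checking that the approximate versus exact versions of the extension property line up. No new topological input beyond~\cite{KLT} and standard dimension-theoretic results should be needed; the substantive content of the paper lies in verifying Theorem~\ref{characterization_easy} for the concrete space $\EL(S_{0,5})$, not in the characterization itself.
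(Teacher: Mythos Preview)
Your overall strategy matches the paper's: reduce to the Kawamura--Levin--Tymchatyn characterization and check its two hypotheses. The first step is also the same in substance --- the paper invokes Dugundji \cite[Theorem~9.1]{Du} to pass from ``connected and locally path connected'' to ``absolute extensor in dimension~$1$'', which is exactly the classical $LC^{n-1}\Rightarrow AE(n)$ equivalence you allude to.

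The gap is in your second step. The KLT hypothesis is not a disjoint $1$--discs property but \emph{strong universality in dimension~$1$}: every continuous map from an arbitrary at most $1$--dimensional Polish space must be approximable by closed embeddings. This is genuinely stronger than any statement about countable families of arcs, and the passage is not a diagonalisation exercise. The paper handles it in two stages: first, Curtis \cite{Cu} gives locally finite $1$--discs $\Rightarrow$ discrete $1$--discs; second, Bowers \cite[Theorem in Appendix]{Bo} gives discrete $m$--discs $\Rightarrow$ strong universality in dimension~$m$. The catch is that Bowers' theorem, as stated, assumes the space is an ANR, and N\"obeling spaces are not ANR's. The paper therefore has to go into Bowers' proof and observe that the ANR hypothesis is only used (twice, in his Lemma~C) to produce controlled homotopies between nearby maps of $S^k$ for $k<m$, and that this follows already from local $k$--connectedness by \cite[Theorem~5.1]{Du}. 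You did not anticipate this obstacle; your description of step two as ``a standard exercise using paracompactness'' would not get you there.
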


Recall that a topological space is \emph{Polish} if it is separable
and admits a complete metric. A topological space has
\emph{dimension $0$} (resp.\ \emph{dimension at most $m$}, for
$m>0$) if each point has a basis of neighbourhoods with empty
boundaries (resp.\ with boundaries of dimension at most $m-1$). A
space has \emph{dimension $m$}, for $m>0$, if it has dimension at
most $m$, but does not have dimension $m-1$. In case of a Polish
space this coincides with the usual covering dimension (see
\cite[Theorem 1.7.7]{Eng}).

\begin{defin}
A topological space $X$ satisfies the \emph{locally finite
$m$--discs property} if we have the following. For any family of
continuous maps $f_n \colon I^m=[0,1]^m \rightarrow X$, where $n\in
\N$, and any open cover $\mathcal{U}$ of $X$, there are continuous
maps $g_n\colon I^m \rightarrow  X$ such that
\begin{enumerate}[(i)]
\item
for each $x\in X$ there is a neighbourhood $U\ni x$ satisfying
$g_n(I^m)\cap U=\emptyset$ for sufficiently large $n$,
\item
for each $t\in I^m, n\in \N,$ there is $U\in \mathcal{U}$ such that
both $f_n(t)$ and $g_n(t)$ lie in $U$ (we say that such $f_n$ and
$g_n$ are \emph{$\mathcal{U}$--close}).
\end{enumerate}
If additionally $g_n(I^m)$ may be required to be pairwise disjoint,
then $X$ satisfies the \emph{discrete $m$--discs property}.
\end{defin}

In the remaining part of this section we explain how to derive
Theorem~\ref{characterization_easy} from the following.

\begin{theorem}[{\cite[Theorem 2.2]{KLT}}]
\label{characterization_hard} A $1$--dimensional Polish space is the
N\"obeling curve if and only if it is an absolute extensor in
dimension $1$ and strongly universal in dimension $1$.
\end{theorem}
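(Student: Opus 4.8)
The plan is to establish both implications of the characterisation. The overall strategy follows the template of uniqueness-by-absorption theorems in geometric topology --- Anderson's work on $\ell^2$, Toru\'nczyk's characterisation of Hilbert cube manifolds, Bestvina's characterisation of Menger compacta --- specialised to a setting that is non-locally-compact but finite ($1$-)dimensional. Write $N=N^3_1$ for the N\"obeling curve.

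\textbf{Necessity.} One checks that $N$ itself has the three listed properties. It is Polish and $1$-dimensional: $N$ is the complement in $\R^3$ of the countable family of affine lines obtained by fixing two of the three coordinates to be rational, hence a $G_\delta$ subset of $\R^3$, so completely metrizable and separable, and its covering dimension is $1$ by the classical N\"obeling--Pontryagin estimates (see \cite[Theorem 1.7.7 ff.]{Eng}). It is an absolute extensor in dimension $1$: since the removed lines are nowhere dense and of codimension $2$ in $\R^3$, generic paths and generic disc homotopies avoid them, so $N$ is path connected and locally path connected; and a metrizable space is an absolute extensor in dimension $1$ precisely when it is path connected and locally path connected (the Dugundji-type extension theorem). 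Finally $N$ is strongly universal in dimension $1$: every at most $1$-dimensional separable metric space admits a closed embedding into $N$, and, because $2\cdot 1+1=3$, maps of $1$-dimensional spaces into $\R^3$ can be approximated by embeddings; combining these, a map of a $1$-dimensional Polish space into $N$ can be approximated --- rel a closed subset on which it is already a closed embedding onto a Z-set --- by a closed embedding onto a Z-set.

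\textbf{Sufficiency.} This is the substantial direction: any two $1$-dimensional Polish spaces $X,Y$ that are absolute extensors in dimension $1$ and strongly universal in dimension $1$ are homeomorphic, hence each is homeomorphic to $N$. The engine is a \emph{Z-set unknotting theorem} for such spaces: if $A,B\subset X$ are Z-sets and $h\colon A\to B$ is a homeomorphism for which the inclusion $A\hookrightarrow X$ is homotopic to the composite $A\xrightarrow{h}B\hookrightarrow X$ --- the only possible obstruction --- then $h$ extends to a homeomorphism of $X$ onto itself, with arbitrarily small control when the homotopy is small. This is proved by an infinite swindle: use the absolute-extensor property to extend the connecting homotopy off the Z-sets, use strong universality to correct the intermediate maps into closed embeddings with Z-set image, and iterate, the corrections being chosen summable in a fixed complete metric so that the two resulting sequences of maps converge to mutually inverse homeomorphisms. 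With the unknotting theorem available, $X\cong Y$ follows by a back-and-forth construction: enumerate countable families of Z-sets that are ``dense'' in $X$ and in $Y$; using strong universality, closed-Z-embed $X$ into $Y$ and $Y$ into $X$; and alternately apply the unknotting theorem to absorb one more listed Z-set at each stage, controlling the corrections so that the alternating composition converges to a homeomorphism $X\to Y$ whose inverse is the limit from the other side.

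\textbf{Main obstacle.} The difficulty is concentrated in the Z-set unknotting theorem in this category. Unlike the Hilbert cube case there is no local compactness and no ambient manifold, so engulfing and the torus trick are unavailable; one must perform all general-position manoeuvres inside $X$ using only the absolute-extensor and strong-universality hypotheses. Concretely, the heart of the matter is to push homotopies of $1$-dimensional complexes (and ultimately of $1$-dimensional Polish spaces) off $1$-dimensional Z-sets in a controlled fashion --- an operation that succeeds because everything in sight is at most $1$-dimensional and strong universality is precisely the axiom encoding that $X$ has the codimension-$2$ general position behaviour of $\R^3$. Once this is in place, the remaining work --- arranging all approximations to be small enough for completeness-based convergence while guaranteeing that the limits remain surjective, i.e.\ that the listed Z-sets are genuinely exhausted --- is the standard but delicate bookkeeping of an infinite swindle.
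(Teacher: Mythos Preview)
The paper does not prove this statement at all: Theorem~\ref{characterization_hard} is quoted verbatim from \cite[Theorem 2.2]{KLT} and used as a black box. The only argument in the paper concerning this theorem is the reduction \emph{from} it to Theorem~\ref{characterization_easy}, via Dugundji's criterion for being an $\mathrm{AE}(1)$ and Bowers' argument (Theorem~\ref{Bowers}) that the discrete $1$--discs property implies strong universality. So there is nothing in the paper to compare your proposal against.

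That said, your outline is broadly faithful to how the cited result is actually proved in \cite{KLT} and in the parallel literature (Bestvina for Menger compacta, Ageev and Nag\'orko for higher N\"obeling spaces): one verifies the properties for the model space, then proves a uniqueness theorem via Z-set unknotting and a back-and-forth absorption argument. Two cautions. First, your necessity paragraph is a little casual about strong universality of $N$: the fact that $1$--dimensional Polish spaces embed in $N$, together with approximation of maps into $\R^3$ by embeddings, does not by itself give approximation by \emph{closed} embeddings \emph{into $N$}; one really needs the Z-set machinery already at this stage, or a direct argument specific to $N$. Second, in the sufficiency direction your sketch stays at the level of strategy; the genuine work in \cite{KLT} is exactly what you flag as the ``main obstacle'', and it is not obtainable by general nonsense --- the non-local-compactness forces one to replace covers by open sets with carefully chosen partitions and to control the swindle with more care than in the compact Menger case. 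As a roadmap your proposal is accurate; as a proof it is a summary of \cite{KLT} rather than an independent argument.
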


In fact, in order to address Conjecture~\ref{conjecture} in the future, 
we have decided to discuss the higher
dimensional analogue of Theorem~\ref{characterization_hard}.

\begin{theorem}[{\cite[Topological rigidity theorem]{N}}]
\label{characterization_hard multidim} An $m$--dimensional Polish
space is the $m$--dimensional N\"obeling space if and only if it is
an absolute extensor in dimension $m$ and strongly universal in
dimension $m$.
\end{theorem}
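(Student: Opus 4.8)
The plan is to prove the two implications separately; the ``only if'' direction is the classical and comparatively easy half, and the ``if'' direction carries essentially all the weight. For ``only if'' one must see that $N=N^{2m+1}_m$ is a separable completely metrizable space of covering dimension $m$ (classical, essentially Nöbeling's theorem) which is an absolute extensor in dimension $m$ and strongly universal in dimension $m$. The extension property follows from the self-similar local structure of $N$ --- near any point it is homeomorphic to the product of a Nöbeling space of smaller ambient dimension with a Euclidean factor --- together with a Dugundji-type patching over nerves of locally finite covers. Strong $m$-universality reduces to general position in $\R^{2m+1}$: if $Z$ is an at most $m$-dimensional Polish space, $A\subseteq Z$ is closed, and $f\colon Z\to\R^{2m+1}$ restricts on $A$ to a closed embedding with image in $N$, then an arbitrarily small perturbation rel $A$ makes $f$ injective with $f(Z)\subseteq N$ and $f(Z)$ a $Z_m$-set, because both the self-intersection locus of $f$ and the preimage under $f$ of the deleted rational part (a countable union of affine subspaces of codimension $m+1$) have negative expected dimension in the $(2m+1)$-dimensional target.

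For the ``if'' direction, let $X$ be an $m$-dimensional Polish space that is $\mathrm{AE}(m)$ and strongly universal in dimension $m$. I would follow the pattern of the Toru\'nczyk--Edwards--Bestvina characterization theorems for $\ell_2$--, Hilbert-cube-- and Menger manifolds. First develop a theory of \emph{$Z_m$-sets} in $X$: closed subsets off which every map of an at most $m$-dimensional compactum into $X$ can be instantly homotoped; strong $m$-universality provides these in abundance and lets one upgrade maps of cubes $I^m$ to $Z_m$-embeddings. Then prove two pillar theorems. The first is a \emph{$Z_m$-set unknotting theorem}: a homeomorphism between two $Z_m$-sets of $X$ that is $\mathcal U$-close to being ambiently realizable extends to a self-homeomorphism of $X$ which is $\mathcal V$-close to the identity, where $\mathcal V$ refines $\mathcal U$ appropriately. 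The second is a \emph{skeletoid absorption theorem}: any two $\sigma Z_m$-sets of $X$ assembled sufficiently generically from at most $m$-dimensional compacta (``$m$-skeletoids'') can be carried one onto the other by an ambient homeomorphism close to the identity. Granting these, use $\mathrm{AE}(m)$ of $X$ and of $N$ to manufacture maps $X\to N$ and $N\to X$, use strong $m$-universality on both sides to correct them to $Z_m$-embeddings, and conclude first that $X$ is locally homeomorphic to $N$ and then --- writing $N$ as glued from charts with $\sigma Z_m$-overlaps and transporting this decomposition via the pillar theorems --- that $X\cong N$ globally.

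Equivalently, and closer to a direct construction, build the homeomorphism $h\colon X\to N$ as a uniform limit $h=\lim_k h_k$: let $h_0$ approximate a fixed proper map (supplied by $\mathrm{AE}(m)$ of $N$), and pass from $h_k$ to $h_{k+1}$ relative to a cover $\mathcal U_k$ refining $\mathcal U_{k-1}$ very rapidly by two small corrections --- an \emph{injectivity} correction turning $h_k$ into an essentially $Z_m$-embedding (using strong $m$-universality of $X$, where $\dim X=m$ enters), and a \emph{density} correction enlarging $h_k(X)$ to be $\mathcal U_k$-dense in $N$ (using strong $m$-universality of $N$ together with $\mathrm{AE}(m)$ of $X$ to pull the enlargement back). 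The unknotting and absorption theorems make the two corrections mutually compatible and guarantee that both $(h_k)$ and the partial inverses are Cauchy in the sup metric; the limit is then a continuous bijection, and properness of the approximated map forces $h$ to be a homeomorphism. This is the dimension-$m$ analogue of Toru\'nczyk's deduction of $X\cong\ell_2$ from the discrete approximation property.

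The main obstacle --- the step with no soft proof --- is the $Z_m$-set unknotting theorem, which must be proved by induction on $m$. Its low-dimensional input is the solution of the Nöbeling space characterization problem (Ageev, Levin, Nag\'orko) together with sharp general-position lemmas inside $N^{2m+1}_m$; the inductive step requires Nag\'orko's apparatus of \emph{regular} and \emph{semiregular covers} and a \emph{carrier theorem} allowing abstract nerve-level combinatorial data to be realized by genuinely $\mathcal U$-controlled maps into $X$ --- this is what replaces, in this low-dimensional setting, the convex-combination flexibility available in a Hilbert cube. Setting up that apparatus, and verifying that $\mathrm{AE}(m)$ and strong $m$-universality pass to the open subsets and to the auxiliary spaces arising in the induction, is where essentially all the length of the argument lies; once the two pillar theorems are available, the production of $h$ is a comparatively formal shrinking argument.
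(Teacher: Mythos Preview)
The paper does not prove this theorem at all: it is quoted verbatim as \cite[Topological rigidity theorem]{N} and used as a black box. In the paper it serves only, together with Theorem~\ref{Bowers}, to deduce Corollary~\ref{characterization_easy multidim} (and hence Theorem~\ref{characterization_easy} in the case $m=1$). So there is no ``paper's own proof'' to compare your proposal against.

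What you have written is a reasonable high-level outline of the strategy that actually goes into Nag\'orko's thesis (and the parallel work of Ageev and Levin): the Toru\'nczyk-style scheme with $Z_m$-sets, a $Z_m$-set unknotting theorem, absorption, and a back-and-forth limit construction, with the carrier/semiregular-cover machinery doing the hard work. As an expository sketch of the literature it is credible; but it is not a self-contained proof, and you correctly identify that the $Z_m$-set unknotting step is where essentially the entire difficulty lies and where a genuine argument (rather than a plan) would be needed. For the purposes of the present paper, however, none of this is required: the authors simply invoke the result.
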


A metric space $X$ is an \emph{absolute extensor in dimension $m$},
if every continuous map into $X$ from a closed subset of an at most
$m$--dimensional metric space extends over the entire space. Assume
now that $X$ is \emph{locally $k$--connected} for every $k<m$ (see
\cite[Definition 3.1]{Du}, for $m=1$ this means that $X$ is locally
path connected). In that case, by Dugundji \cite[Theorem 9.1]{Du},
$X$ is an absolute extensor in dimension $m$ if and only if all of
its homotopy groups in dimension less than $m$ vanish. For $m=1$
this means that $X$ is connected. Summarizing, if a metric space is
locally $k$--connected for every $k<m$, and all of its homotopy
groups in dimension less than $m$ vanish, then it is an absolute
extensor in dimension $m$. In particular, if a metric space is
connected and locally path connected, then it is an absolute
extensor in dimension $1$.

A Polish space $X$ is \emph{strongly universal} in dimension $m$ if
any continuous map $f\colon Y \rightarrow X$ from an at most
$m$--dimensional Polish space $Y$ to $X$ is \emph{approximable} by
closed embeddings. This means that for any open cover $\mathcal{U}$
of $X$ there is a closed embedding $g\colon Y \rightarrow  X$ such
that $f$ and $g$ are $\mathcal{U}$--close. We discuss below, under
what hypothesis strong universality in dimension $m$ follows from
the locally finite $m$--discs property.

By \cite[discussion after Theorem 2.4]{Cu}, any Polish space $X$
satisfying the locally finite $m$--discs property satisfies also the
discrete $m$--discs property. Bowers \cite[Theorem in Appendix, part
(2)]{Bo} proves that the latter implies strong universality in
dimension $m$, under the hypothesis that $X$ is an ANR. Recall that
a topological space $X$ is an \emph{absolute neighbourhood retract}
(shortly, an \emph{ANR}) if for each closed subset $A\subset X$,
which is normal, there is an open neighbourhood $U\subset X$ such
that $A$ is a retract of $U$. Unfortunately, N\"obeling spaces are
not ANR, hence Bowers' theorem as stated is not sufficient for our
purposes. However, his proof yields the following.

\begin{theorem}
\label{Bowers} Let $X$ be a Polish space which is locally
$k$--connected for all $k<m$. If $X$ satisfies the discrete $m$--discs
property, then it is strongly universal in dimension $m$.
\end{theorem}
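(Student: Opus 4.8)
The plan is to follow Bowers' original argument from \cite{Bo} for the implication ``discrete $m$--discs property $\Rightarrow$ strong universality in dimension $m$'', and to check that the only place where the ANR hypothesis is used can be replaced by the weaker assumption of local $k$--connectedness for all $k<m$. First I would fix a continuous map $f\colon Y\to X$ from an at most $m$--dimensional Polish space $Y$, and an open cover $\mathcal U$ of $X$; the goal is to produce a closed embedding $g\colon Y\to X$ that is $\mathcal U$--close to $f$. The standard strategy is an inductive back-and-forth construction over a sequence of successively finer open covers $\mathcal U=\mathcal U_0,\mathcal U_1,\dots$, at each stage perturbing the map on pieces of a locally finite cover of $Y$ so that it becomes injective ``at scale $\mathcal U_k$'', and so that the images of far-apart pieces are pushed apart; the limit map $g$ is then a well-defined continuous injection, and one arranges properness (hence that $g$ is a closed embedding) by an exhaustion argument on $Y$.

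The mechanism that makes each perturbation step possible is exactly the discrete $m$--discs property: since $Y$ has dimension at most $m$, it carries arbitrarily fine locally finite covers by sets that are images of $m$--cubes (or one triangulates the nerve and maps in skeleta of dimension $\le m$), and the discrete $m$--discs property lets one replace the restrictions of $f$ to these cubes by maps with pairwise disjoint images while moving points only within cover elements. Iterating and taking a limit in the (complete) sup-metric on maps $Y\to X$ — with the mesh of the covers $\mathcal U_k$ shrinking fast enough relative to a complete metric on $X$ — yields a continuous limit $g$, and the disjointness built in at every scale forces $g$ to be injective; the local-finiteness clause (i) of the discs property is what guarantees the limit map is proper, so that $g(Y)$ is closed and $g$ is a homeomorphism onto its image.

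The role of the ANR hypothesis in Bowers' proof is purely local: it is used to guarantee that the perturbing homotopies can be chosen small, i.e.\ that maps of $m$--cubes which are $\mathcal U_k$--close can be connected, or adjusted, inside slightly larger cover elements — in effect a local extension/approximation statement for maps of $\le m$--complexes into $X$. This is precisely what local $k$--connectedness for all $k<m$ supplies: by the Dugundji-type results quoted before Theorem~\ref{Bowers} (see \cite[Theorem 9.1]{Du}), a space that is locally $k$--connected for $k<m$ is an absolute extensor in dimension $m$ \emph{locally}, which is all the argument needs; one never uses a global retraction. So I would go through Bowers' construction paragraph by paragraph, and each time an ANR property is invoked, substitute the corresponding local $\mathrm{LC}^{<m}$ statement. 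The main obstacle I anticipate is bookkeeping rather than conceptual: one must track the three competing scales (the mesh of $\mathcal U_k$, the diameters of the cover of $Y$, and the size of the allowed homotopies) and verify that local $k$--connectedness genuinely delivers homotopies small enough at every inductive stage — in particular that the ``at most $m$--dimensional'' hypothesis on $Y$ is used correctly so that only $\mathrm{LC}^{<m}$, and not $\mathrm{LC}^{m}$, is required.
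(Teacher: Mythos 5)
Your approach matches the paper's proof: both follow Bowers' argument and observe that the ANR hypothesis is used only locally, to produce small homotopies between nearby maps of low-dimensional complexes, and that this is exactly what local $k$--connectedness for $k<m$ provides. The paper is more precise about where and how: it pinpoints the two places in Bowers (lines 1 and 5, page 129, in the proof of Lemma C) where ANR enters, and isolates the exact property that is needed, namely that for every open cover $\mathcal U$ there is a refinement $\mathcal U'$ such that any two $\mathcal U'$--close maps $f_0,f_1\colon S^k\to X$ (with $k<m$) are joined by a homotopy with tracks in members of $\mathcal U$. This is supplied by Dugundji's Theorem~5.1, not Theorem~9.1 as you cite; Theorem~9.1 is the global extension/absolute-extensor statement used earlier in the paper, whereas the local small-homotopy statement you actually need here is Theorem~5.1, and your phrase ``absolute extensor in dimension $m$ locally'' should be sharpened to the homotopy-refinement property above. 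Once that reference is corrected and the exact property spelled out, your plan becomes the paper's proof.
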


In other words, we can replace the ANR hypothesis in 
\cite[Theorem in Appendix]{Bo} by local $k$--connectedness for all $k<m$.
Indeed, the only two places in the proof, where the ANR hypothesis
is used, are lines 1 and 5 on page 129, in the proof of Lemma C.
However, in both cases the argument only requires the following
property (which is satisfied if $X$ is an ANR). Namely, let $k<m$ and
let $S^{k}$ be the $k$--sphere. Bowers' argument requires that for
every open cover $\mathcal{U}$ of $X$, there is a refinement
$\mathcal{U}'$, such that if $f_0,f_1\colon S^{k} \rightarrow X$ are
$\mathcal{U}'$--close, then there is a homotopy between $f_0$ and
$f_1$ with each track contained in some $U\in \mathcal{U}$. By
\cite[Theorem 5.1]{Du} this property follows from local
$k$--connectedness. This concludes the argument for Theorem~\ref{Bowers}.
\medskip

By Theorems~\ref{characterization_hard multidim} and~\ref{Bowers}
and by the preceding discussion we conclude with the following,
which in the case of $m=1$ amounts exactly to
Theorem~\ref{characterization_easy}.

\begin{cor}
\label{characterization_easy multidim} Let $X$ be Polish space of
dimension $m$ which is locally $k$--connected for every $k<m$, and
all of whose homotopy groups in dimension less than $m$ vanish.
Assume that $X$ satisfies the locally finite $m$--discs property.
Then $X$ is homeomorphic to the $m$--dimensional N\"obeling space.
\end{cor}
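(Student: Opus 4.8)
The plan is to deduce Corollary~\ref{characterization_easy multidim} by feeding the hypotheses through the chain of results assembled in this section, and then to observe that for $m=1$ the statement is literally Theorem~\ref{characterization_easy}. So I would first recall what we are given: $X$ is a Polish space of dimension exactly $m$, it is locally $k$--connected for every $k<m$, all its homotopy groups in dimension less than $m$ vanish, and it satisfies the locally finite $m$--discs property. The goal is to verify the two conditions appearing in Theorem~\ref{characterization_hard multidim}, namely that $X$ is an absolute extensor in dimension $m$ and strongly universal in dimension $m$; once both hold, that theorem identifies $X$ with the $m$--dimensional N\"obeling space, and since $X$ has dimension $m$ this is the correct N\"obeling space (no dimension mismatch).

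For the absolute extensor property, I would invoke the discussion following Theorem~\ref{characterization_hard multidim}: by Dugundji~\cite[Theorem 9.1]{Du}, a metric space that is locally $k$--connected for every $k<m$ and has vanishing homotopy groups in dimensions less than $m$ is an absolute extensor in dimension $m$. Both hypotheses are among our assumptions, so this is immediate. For strong universality in dimension $m$, I would chain two implications. First, by~\cite[discussion after Theorem 2.4]{Cu}, a Polish space satisfying the locally finite $m$--discs property also satisfies the discrete $m$--discs property; our $X$ satisfies the former by hypothesis, hence the latter. Second, by Theorem~\ref{Bowers} (the local-connectedness-strengthened version of Bowers' theorem proved above), a Polish space that is locally $k$--connected for all $k<m$ and satisfies the discrete $m$--discs property is strongly universal in dimension $m$; again both hypotheses are met. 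Combining, $X$ is strongly universal in dimension $m$.

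Having established both conditions, Theorem~\ref{characterization_hard multidim} applies and yields that $X$ is homeomorphic to the $m$--dimensional N\"obeling space, which proves the corollary. Finally, to see that the $m=1$ case recovers Theorem~\ref{characterization_easy}, I would note that for $m=1$ the phrase ``locally $k$--connected for every $k<1$'' is vacuous except that, as remarked in the text, local $0$--connectedness is exactly local path connectedness, and ``all homotopy groups in dimension less than $1$ vanish'' is exactly connectedness; so the hypotheses of Corollary~\ref{characterization_easy multidim} for $m=1$ read: $X$ is Polish of dimension $1$, connected, locally path connected, and satisfies the locally finite $1$--discs property --- precisely the hypotheses of Theorem~\ref{characterization_easy}.

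There is no serious obstacle here: the corollary is a bookkeeping assembly of results already stated or proved in this section, and the only mild subtlety --- that the rigidity theorem pins down the N\"obeling space of the \emph{correct} dimension --- is handled by the assumption that $X$ has dimension exactly $m$. The genuinely substantive work, namely upgrading Bowers' theorem by replacing the ANR hypothesis with local $k$--connectedness, has already been carried out in the proof of Theorem~\ref{Bowers} above, so nothing further is needed.
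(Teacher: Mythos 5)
Your proposal is correct and follows exactly the route the paper indicates: Dugundji for the absolute extensor property, Curtis plus the strengthened Bowers theorem for strong universality, and then the Nag\'orko rigidity theorem, with the $m=1$ specialisation recovering Theorem~\ref{characterization_easy}. You have simply spelled out in full what the paper compresses into ``by the preceding discussion.''
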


\section{Train track partitions}
\label{section The ending lamination space} Our strategy of proving
Theorem~\ref{main} is to use the topology of $\PML$, the space of
projective measured laminations, to obtain information about the
topology of the ending lamination space $\EL$. To this end, we
construct a sequence of finer and finer partitions of $\PML$ into
polyhedra using Thurston's notion of train tracks (see \cite[Section
8.9]{Th}). We then show that these polyhedra project to a convenient
neighbourhood basis of $\EL$.

For a thorough treatment of train tracks, as well as the basic
definitions, we refer the reader to the book of Penner and Harer
\cite{PH}. Note however that, in contrast to the treatment in
\cite{PH}, for us every train track is \emph{generic} (i.e.\ each
switch is at most trivalent). In the following, we briefly recall
some definitions and statements important to the current work.

Let $\tau$ be a recurrent train track. We denote by $P(\tau)$ the
\emph{polyhedron of projective measures} of $\tau$, that is the set
of all projective measured laminations which are carried by $\tau$.
$P(\tau)$ has the structure of an affine polyhedron, where the faces of
$P(\tau)$ correspond to recurrent proper subtracks $\tau' < \tau$
(see \cite[pp.\ 116--117]{PH}). The inclusion map $P(\tau) \subset
\PML$, where $P(\tau)$ is equipped with the topology coming from the
polyhedral structure, is continuous. In particular, for any train
track $\tau$ the polyhedron of projective measures $P(\tau)$ is a
closed set in $\PML$. The \emph{interior of $P(\tau)$} is its interior 
with respect to the polyhedral structure, i.e.\ the set of transverse 
measures which put positive mass on each branch of $\tau$. Note that 
in general this is not the interior of the set $P(\tau) \subset \PML$ with
respect to the topology of $\PML$. In the sequel we denote the
interior of the polyhedron of projective measures by $V(\tau)\subset
\PML$. We denote the boundary $P(\tau) \setminus V(\tau)$ of the
polyhedron of projective measures by $\partial V(\tau)$. From now
on, the expression \emph{boundary of $X$} will always mean
$\mathrm{Fr} X=\overline{X}\setminus \mathrm{int} X$ (the boundary
in the topological sense). Note that in this terminology $\partial
V(\tau)$ might not be the boundary of $V(\tau) \subset \PML$. Let
$$U(\tau) = \phi(V(\tau) \cap \MPML)$$ (equivalently, $U(\tau)$ is the
set of ending laminations which are fully carried by $\tau$). We
denote the inverse correspondence between (families of) these sets
by $\Psi$, i.e.\ $\Psi(U(\tau)) = V(\tau)$.

Unless stated otherwise, from now on we restrict to the case of
$\Sp$, where $\PML$ is $3$--dimensional. We call a train track
$\eta$ \emph{complete} if it is recurrent, transversely recurrent
and maximal. Recall that if $\eta$ is complete, then $V(\eta)$ is
$3$--dimensional, hence open in $\PML$ (see e.g.\ \cite[Lemma
3.1.2]{PH}) and consequently $U(\eta)$ is open in $\EL$. In
particular we have $\partial V(\eta) = \mathrm{Fr}V(\eta)$. We call
a train track $\sigma$ \emph{nearly complete}, if it is birecurrent,
carries an ending lamination and $P(\sigma)$ is $2$--dimensional (in
particular $\sigma$ is not complete).

 \begin{rem}
   \label{no crossing boundaries}
   Let $\mu_0, \mu_1$ be measured geodesic
   laminations which do not intersect transversally. Suppose that for some train track $\tau$ its polyhedron of projective measures $P(\tau)$ contains a projective
   class of $\mu_t = (1-t)\mu_0 + t\mu_1$ for
   some $t \neq 0, 1$. Then the whole interval $\{\mu_t\}_{t \in [0,1]}$ projects into $P(\tau)$. This is because the support of $\mu_t$
equals $\phi(\mu_1) \cup \phi(\mu_2)$ except maybe for $t=0$ or $1$, and projective measured laminations are carried by train tracks if
and only if their supports are.
 \end{rem}

We will need the following lemma, which shows how $\MPML$ can
intersect the polyhedron of projective measures of a complete or
nearly complete train track.
\begin{lemma}
\label{key lemma}~\\
\vspace{-.5cm}
  \begin{enumerate}[(i)]
  \item Let $\sigma$ be a nearly complete train track. Then $\partial V(\sigma)$ contains no filling
    lamination. In particular, $\partial V(\sigma)$ is disjoint from $\MPML$.
  \item Let $\eta$ be a complete train track. Then the $1$--skeleton of $\partial V(\eta)$ contains
    no filling lamination. In particular, the intersection of the $1$--skeleton of $\partial V(\eta)$ with $\MPML$ is empty.
  \item Let $\sigma$ be a nearly complete train track. Then $U(\sigma)$ is closed in $\EL$.
  \end{enumerate}
\end{lemma}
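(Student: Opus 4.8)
The three parts are closely linked, and the plan is to obtain (i) and the filling-statement of (ii) by a common mechanism, then deduce (iii) from (i). The starting observation is that a point of $\partial V(\sigma)$ (resp.\ of the $1$--skeleton of $\partial V(\eta)$) lies, by the description of faces of $P(\tau)$ recalled in Section~\ref{section The ending lamination space} from \cite[pp.\ 116--117]{PH}, in $P(\sigma')$ for some recurrent proper subtrack $\sigma' < \sigma$ (resp.\ in $P(\eta')$ for a recurrent subtrack $\eta' < \eta$ with at most one branch more than the number needed to be $1$--dimensional, i.e.\ $\dim P(\eta') \le 1$). So it suffices to show: if $\tau'$ is a recurrent train track with $\dim P(\tau') \le 1$, and either $\tau'$ is a proper subtrack of a nearly complete track, then $\tau'$ carries no filling lamination. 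Wait---more precisely, for (i) the relevant subtracks $\sigma'$ have $\dim P(\sigma') \le 1$ since $P(\sigma)$ is $2$--dimensional and proper faces drop dimension; for (ii) the $1$--skeleton consists of the $P(\eta')$ with $\dim P(\eta') \le 1$. Thus in both cases I reduce to train tracks whose polyhedron of projective measures has dimension at most $1$.

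The key step is then: \emph{a recurrent train track $\tau'$ on $\Sp$ with $\dim P(\tau') \le 1$ carries no filling lamination.} If $\dim P(\tau') \le 1$, then $P(\tau')$ is either empty, a single projective class, or a single arc, so $\tau'$ carries at most a $1$--parameter family of projective measured laminations. A minimal filling lamination on $\Sp$ is uniquely ergodic? No---that's not automatic. Instead I would argue combinatorially: if $\tau'$ carried a filling (hence in particular, connected, not contained in a proper subsurface) lamination $L$, then $\tau'$ would have to be ``large enough'' to fill $\Sp$, and a filling train track on $\Sp$ has enough branches that its polyhedron of projective measures has dimension at least $2$. Concretely, the dimension of $P(\tau')$ is (number of branches) minus (number of switches), and a recurrent birecurrent? train track carrying a filling lamination on the five-punctured sphere must have at least, say, enough branches to force this difference to be $\ge 2$; an Euler-characteristic / complexity count on $\Sp$ (where $\dim \PML = 3$) gives the bound. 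This is the combinatorial heart of the argument and the step I expect to be the main obstacle---getting the counting exactly right, and handling the distinction between ``carries a filling lamination'' and ``is a filling train track'' (a non-filling train track can a priori carry a lamination that, after taking the geodesic representative, fills, but a carried geodesic lamination is contained in a small neighborhood of $\tau'$, so if $\tau'$ misses an essential subsurface or curve then so does $L$; this rules it out).

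Granting the key step, parts (i) and (ii) follow: $\partial V(\sigma)$ and the $1$--skeleton of $\partial V(\eta)$ are unions of such $P(\tau')$, hence contain no filling lamination, and since every element of $\MPML$ is (the projective class of a measure on) a filling lamination by definition of $\EL$, the intersections with $\MPML$ are empty. For (iii): I want to show $U(\sigma)$ is closed in $\EL$, equivalently (pulling back by the quotient map $\phi$ and intersecting with $\MPML$) that $V(\sigma)\cap\MPML$ is closed in $\MPML$. Since $P(\sigma)$ is closed in $\PML$, it suffices to show $P(\sigma) \cap \MPML = V(\sigma)\cap \MPML$, i.e.\ that no point of $\partial V(\sigma) = P(\sigma)\setminus V(\sigma)$ lies in $\MPML$---but that is exactly part (i). Hence $U(\sigma) = \phi(P(\sigma)\cap\MPML)$ with $P(\sigma)\cap\MPML$ closed in $\MPML$; one then checks this projects to a closed set in $\EL$ using that $\phi$ restricted to $\MPML$ is a quotient map and the fibers are the (compact) simplices of measures on a fixed uniquely-supported lamination, so saturation behaves well. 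The only subtlety is verifying that a closed, $\phi$-saturated subset of $\MPML$ has closed image; since $\EL$ carries the quotient topology this is immediate once saturation is known, and $P(\sigma)\cap\MPML$ is saturated because being carried by $\sigma$ depends only on the support (Remark~\ref{no crossing boundaries}).
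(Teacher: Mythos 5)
Your approach is correct and is essentially the paper's: both rest on the generalised Euler characteristic count for complementary regions of a filling train track on $\Sp$ (which pins them down to five once-punctured monogons plus a triangle, or four once-punctured monogons plus a once-punctured bigon), and part (iii) is deduced from (i) together with closedness of $P(\sigma)$ and the $\phi$-saturation of $P(\sigma)\cap\MPML$ exactly as you describe. The only organisational difference is that the paper proves (i) by directly observing that erasing a branch from the four-monogons-plus-bigon configuration merges two complementary regions into a twice-punctured or non-disc one, rather than by first establishing the intermediate claim that every recurrent filling track on $\Sp$ has $\dim P\ge 2$; the two routes rely on the same count and are interchangeable.
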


\begin{proof}
  \begin{enumerate}[(i)]
  \item Let $\sigma$ be a nearly complete train track. Recall that $\partial V(\sigma)$ is the union
    of $P(\tau)$ over all recurrent proper subtracks $\tau < \sigma$. We show that no proper subtrack
    of $\sigma$ is filling, which immediately implies assertion (i).

    Since $\sigma$ is nearly complete, it carries a filling lamination. Hence its complementary regions
    are topological discs or once-punctured discs. Thus, on a five-punctured sphere a nearly complete
    train track has at least five complementary regions. Each of those regions gives a
    contribution of at least $-\frac{1}{2}$ to the (generalised) Euler characteristic of $\Sp$, with
    a contribution of exactly $-\frac{1}{2}$ if and only if the component is a triangle
    or a once-punctured monogon.

    If $\sigma$ had more than five complementary regions, the fact that $\chi(\Sp)$ equals $-3$ would imply
    that these regions have to be five once-punctured monogons and one triangle --- which
    would mean that $\sigma$ was complete.

    Hence, $\sigma$ has four once-punctured monogons and one once-punctured bigon as complementary
    regions. A proper subtrack $\tau < \sigma$ needs to erase at least one branch of $\sigma$, and hence
    join two of these regions (or one to itself). Thus some complementary region of $\tau$
    contains either two punctures or an essential curve --- hence $\tau$ is not filling.
  \item Let $\eta$ be complete. The $1$--skeleton of $\partial V(\eta)$ is the
    union of $P(\tau)$ over recurrent $\tau<\eta$ which are obtained from $\eta$ by removing at least
    two branches. Now assertion (ii) follows with the same Euler characteristic argument as in the
    proof of assertion (i).
  \item Let $\sigma$ be a nearly complete train track. The polyhedron of projective measures $P(\sigma)$ is
    a closed set in $\PML$. Since the topology of $\EL$ is induced by the map $\phi$, the set
    $\phi(P(\sigma) \cap \MPML)$ is closed in $\EL$. However, by assertion (i), we have
    $P(\sigma) \cap \MPML = V(\sigma) \cap \MPML$ and hence $U(\sigma)=\phi(V(\sigma)\cap\MPML)$ is closed in $\EL$.
  \end{enumerate}
  \qed
\end{proof}

\medskip
In the remaining part of this section, our aim is to find a
convenient neighbourhood basis for $\EL$, determined by a certain
set of train tracks.

\begin{defin}
\label{track basis partition}
Let $\Tau$ be a finite collection of complete
train tracks and let $\Sigma$ be a finite collection of nearly complete train tracks.
The pair $(\Tau, \Sigma)$ is called a \emph{train track partition} if
all $V(\tau)$ are pairwise
disjoint, for $\tau \in \Tau \cup \Sigma$, and together cover all of $\MPML$.

Note that in particular all $U(\tau)$ are pairwise
disjoint, for $\tau \in \Tau \cup \Sigma$, and cover all of $\EL$.
\end{defin}

\begin{example}
\label{examples}~\\
\vspace{-.5cm}
  \begin{enumerate}[(i)]
  \item
  Let $P$ be any pants decomposition for $\Sp$. Let $\Tau$ be the set of complete
  \emph{standard train tracks} with respect to $P$ (see \cite[Section 2.6]{PH}) and let $\Sigma$
    be the set of their nearly complete subtracks.

    We claim that $(\Tau, \Sigma)$ is a train track partition. To this end, first note that the $P(\eta)$,
    for $\eta \in \Tau$,
    cover all of $\PML$ and each projective measured lamination $\lambda$ is fully carried by a unique
    subtrack $\tau$ of one of the $\eta \in \Tau$ (see \cite[Sections 2.7 and 2.8]{PH}).
    In particular, $V(\tau)$ are disjoint for all $\tau \in \Tau \cup \Sigma$.
    By Lemma~\ref{key lemma}(ii), every $\lambda \in \MPML$ lies in $V(\tau)$ for
    some $\tau \in \Tau \cup \Sigma$.

    We call such a pair $(\Tau,\Sigma)$ a \emph{standard train track partition}.

  \item
  Let $(\Tau, \Sigma)$ be a train track partition, and let $\eta \in T$ be a complete train track. Denote
  by $\eta_L$ (and $\eta_R$) the left (respectively right) split of $\eta$ along some large branch $b$.
  Note that splitting $\eta$ amounts to cutting $P(\eta)$ along a hyperplane, so that we have
  $P(\eta) = P(\eta_L) \cup P(\eta_R)$ and $P(\eta_L) \cap P(\eta_R) = P(\sigma)$ for
  a common subtrack $\sigma$ of $\eta_L$ and $\eta_R$.

  If both $\eta_L$ and $\eta_R$ are complete, define $\Tau'$ by replacing $\eta \in T$ by
  $\{\eta_L, \eta_R\}$. Then, if $\sigma$ is nearly complete, add $\sigma$ to $\Sigma$ to obtain $\Sigma'$. If only one of the two train tracks $\eta_L$ and $\eta_R$ is
  complete, replace $\eta \in \Tau$ by this
  train track to get $\Tau'$ and set $\Sigma' = \Sigma$.

  Note that in both cases the resulting pair $(\Tau', \Sigma')$ is a train track partition.
  We say that $(\Tau', \Sigma')$ is obtained from $(\Tau, \Sigma)$ by
  a \emph{complete splitting move along $b$}.

  \item
  Let $(\Tau, \Sigma)$ be a train track partition. Let $\sigma \in \Sigma$ be any nearly
  complete train
    track. Consider the left (respectively right) split $\sigma_L$ (and $\sigma_R$) along some large
    branch $b$. As above we have
    $P(\sigma) = P(\sigma_L) \cup P(\sigma_R)$ and $P(\sigma_L) \cap P(\sigma_R) = P(\tau)$ for
    a common subtrack $\tau$ of $\sigma_L$ and $\sigma_R$. If both $\sigma_L$ and $\sigma_R$ are nearly complete,
    define $\Sigma'$ by replacing $\sigma \in \Sigma$ by $\{\sigma_L, \sigma_R\}$. Otherwise, replace $\sigma$
    by the train track $\sigma_L$ or $\sigma_R$ which is nearly complete.
    Note that in both cases, by Lemma~\ref{key lemma}(i), the resulting pair $(\Tau, \Sigma')$ is a train
    track partition.
    We say that $(\Tau, \Sigma')$ is obtained from $(\Tau, \Sigma)$ by a \emph{nearly complete
    splitting move along $b$}.
  \end{enumerate}
\end{example}

We now use the above examples to obtain the following.

\begin{theorem}
  \label{tracks form basis}
  There exists a sequence $\S = ((\Tau_k, \Sigma_k))_{k=0}^\infty$ of train track partitions satisfying the
  following two properties.
  \begin{description}
  \item[(Subdivision)] Let $K\geq k \geq 0$. For each $\eta \in \Tau_{K}$ there is an $\eta' \in \Tau_k$ satisfying $V(\eta) \subset
  V(\eta')$. For each $\sigma \in \Sigma_{K}$ there is a $\tau \in \Tau_k \cup
\Sigma_k$ satisfying
    $V(\sigma) \subset V(\tau)$.
  \item[(Fineness)] For each ending lamination $\lambda$ and each open set $W$ in $\PML$ containing
    $\phi^{-1}(\lambda)$ there is an open set $V$ in $\PML$ satisfying $W \supset V \supset \phi^{-1}(\lambda)$ of the
    following form.

    Either $V = V(\eta)$ with $\eta \in \Tau_k$, for some $k \geq 0$, or
    $V = V(\eta_1) \cup V(\sigma) \cup V(\eta_2)$ with $\eta_i \in \Tau_{k_i}, \sigma \in \Sigma_k$, for
    some $k_1,k_2,k\geq 0$. In the latter case we additionally require
    $P(\sigma) \subset \partial V(\eta_1) \cap \partial V(\eta_2)$ (see Figure~\ref{fig:triple_neighbourood}).

      We denote by ${\cal V}(\S)$ the family of all open sets $V$ of above type.
  \end{description}
\end{theorem}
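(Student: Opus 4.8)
To build the sequence $\S$, I would start with any standard train track partition $(\Tau_0,\Sigma_0)$ as in Example~\ref{examples}(i), and then produce $(\Tau_{k+1},\Sigma_{k+1})$ from $(\Tau_k,\Sigma_k)$ by performing all possible splitting moves (Example~\ref{examples}(ii) and (iii)) along all large branches of all tracks in $\Tau_k\cup\Sigma_k$ — more precisely, one full round of splitting every current track once along each of its large branches. Since each splitting move replaces $P(\tau)$ either by two half-polyhedra meeting along a common subtrack, or by a single proper subtrack polyhedron, the (Subdivision) property is essentially built in: every $V(\eta)$ for $\eta\in\Tau_{k+1}$ is contained in some $V(\eta')$ with $\eta'\in\Tau_k$, and likewise for nearly complete tracks, with the only subtlety that a nearly complete $\sigma\in\Sigma_{k+1}$ may arise as the splitting interface of a complete $\eta\in\Tau_k$, so the parent can lie in $\Tau_k$ rather than $\Sigma_k$ — which is exactly what the statement allows. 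Iterating, (Subdivision) for general $K\ge k$ follows by composing the one-step inclusions.

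The substance is (Fineness), and here the key input is that repeated splitting shrinks polyhedra to points in the Hausdorff sense: for any complete train track $\eta$ carrying $\lambda$, the nested sequence of sub-polyhedra obtained by splitting along the large branch determined by $\lambda$'s carrying data has diameters (in a fixed metric on $\PML$) tending to $0$, so that $\bigcap_k P(\eta_k)=\phi^{-1}(\lambda)$ — this is the standard fact that an ending lamination is uniquely carried and that the splitting sequence following a lamination converges to it (see \cite[Section~3.4]{PH} or the discussion of splitting sequences converging to laminations). Given $\lambda$ and $W\supset\phi^{-1}(\lambda)$, I would first locate $\lambda$ in the partition $(\Tau_k,\Sigma_k)$: for large $k$, $\lambda$ lies in $V(\tau)$ for a unique $\tau\in\Tau_k\cup\Sigma_k$, and $P(\tau)$ has small diameter. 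If $\lambda$ lies in some $V(\eta)$ with $\eta\in\Tau_k$ complete, then for $k$ large enough $P(\eta)\subset W$ and we may take $V=V(\eta)$. The remaining case is when $\lambda\in V(\sigma)$ for $\sigma\in\Sigma_k$ nearly complete; then $V(\sigma)$ is open in $P(\sigma)$ but, being $2$--dimensional, is not open in $\PML$, so I must fatten it. Here I would use that $\sigma$ is a proper subtrack of some complete tracks on each "side" — concretely, the two complete tracks $\eta_1,\eta_2$ whose common splitting interface is $\sigma$ (equivalently, $P(\sigma)$ is a codimension-one face of both $P(\eta_1)$ and $P(\eta_2)$, so $P(\sigma)\subset\partial V(\eta_1)\cap\partial V(\eta_2)$). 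By (Subdivision) and fineness of the $V(\eta_i)$-polyhedra at deeper levels, I can choose $k_1,k_2$ large so that the relevant sub-polyhedra $P(\eta_i)$ hugging $P(\sigma)$ lie in $W$, and then $V=V(\eta_1)\cup V(\sigma)\cup V(\eta_2)$ is the desired open set: it contains $\phi^{-1}(\lambda)$, sits inside $W$, and is open in $\PML$ because near any point of $V(\sigma)$ the union $P(\eta_1)\cup P(\eta_2)$ fills out a full $3$--dimensional neighbourhood (the two top-dimensional polyhedra abut along $P(\sigma)$ from opposite sides), while at points of $V(\eta_i)$ openness is automatic.

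**Main obstacle.** The delicate point is the last one: verifying that $V(\eta_1)\cup V(\sigma)\cup V(\eta_2)$ is genuinely open in $\PML$, i.e.\ that the two complete polyhedra approach $P(\sigma)$ from "opposite sides" and that no third polyhedron of the partition intrudes near the interior $V(\sigma)$. This is where Lemma~\ref{key lemma} does the work: part (i) guarantees $\partial V(\sigma)$ misses $\MPML$, so $\lambda\in V(\sigma)$ and $V(\sigma)$ is a neighbourhood of $\lambda$ within $P(\sigma)$; and the local structure of $\PML$ near a generic point of a nearly complete $P(\sigma)$ — which is $2$--dimensional and separates a neighbourhood in $\PML$ into exactly two $3$--dimensional pieces, each of which is (a sub-polyhedron of) the $P(\eta_i)$ — must be extracted from the polyhedral structure of $\PML$ and the fact that $\sigma$ has corank one. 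I would also need to ensure the choice of $\eta_1,\eta_2$ is consistent with membership in some $\Tau_{k_i}$: this is handled by arranging in the construction of $\S$ that every nearly complete track ever produced as a splitting interface has both of its "completions" appearing in later $\Tau_{k}$'s, which is automatic since we split exhaustively. Modulo these local-structure verifications, the proof is a bookkeeping argument combining exhaustive splitting, the diameter-shrinking of splitting sequences, and Lemma~\ref{key lemma}.
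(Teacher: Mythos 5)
Your overall strategy — start from a standard partition, split exhaustively, use the nesting $\bigcap_i P(\tau^i) = \phi^{-1}(\lambda)$ (which is \cite[Theorem~8.5.1]{Mos}, Theorem~\ref{nesting} in the paper) to shrink into $W$, and fatten the nearly complete case by $V(\eta_1)\cup V(\sigma)\cup V(\eta_2)$ — is indeed the paper's strategy. But the ``local-structure verifications'' you defer are exactly where the work is, and your sketch contains a genuine gap.

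First, your identification of $\eta_1,\eta_2$ as ``the two complete tracks whose common splitting interface is $\sigma$'' is not always available: elements of $\Sigma_0$ (nearly complete subtracks of the standard tracks) are \emph{not} splitting interfaces, and after many rounds a $\sigma\in\Sigma_k$ may also fail to be a clean interface of two tracks at the same level. The paper sidesteps this by maintaining an invariant it calls (Adjacency): for every $\mu\in\MPML$ lying in some $V(\sigma)$, $\sigma\in\Sigma_k$, there exist $\eta\neq\eta'\in\Tau_k$ with $\mu\in P(\eta)\cap P(\eta')$, \emph{and} the set $V(\eta)\cup V(\eta')\cup(\partial V(\eta)\cap\partial V(\eta'))$ minus the $1$--skeleta of $P(\eta),P(\eta')$ is open. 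This property holds for the standard partition and — crucially — is preserved by splitting moves because of Lemma~\ref{key lemma}(ii) (the $1$--skeleton of $\partial V(\eta)$ misses filling laminations). You never invoke Lemma~\ref{key lemma}(ii), and without it the invariant can fail after splitting.

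Second, and more substantively, your plan to ``choose $k_1,k_2$ large so that the relevant sub-polyhedra $P(\eta_i)$ hugging $P(\sigma)$ lie in $W$'' while keeping $P(\sigma)\subset\partial V(\eta_1)\cap\partial V(\eta_2)$ cannot be done by shrinking the $\eta_i$'s alone: splitting $\eta_i$ along a large branch that also lies in $\sigma$ cuts the face $P(\sigma)$ as well, so after such a split $P(\sigma)$ is no longer contained in $\partial V(\eta_i')$. The paper's resolution is an ordering you do not have: first shrink $\eta^K$ (via the full $\lambda$--splitting sequence), then use (Adjacency) to locate $\eta'^k$ on the other side and shrink it to $\eta'^K\subset W$, \emph{then} form the open set $N$ from $V(\eta^K)\cup V(\eta'^K)\cup(\partial V(\eta^K)\cap\partial V(\eta'^K))$ minus the $1$--skeleta, and \emph{only then} split $\sigma$ until $P(\sigma^L)\subset N$. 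That final inclusion is what guarantees simultaneously that $P(\sigma^L)\subset\partial V(\eta^K)\cap\partial V(\eta'^K)$, that $P(\sigma^L)$ misses the $1$--skeleta so $V(\sigma^L)$ is open in $\partial V(\eta^K)\cap\partial V(\eta'^K)$, and hence that $V = V(\eta^K)\cup V(\sigma^L)\cup V(\eta'^K)$ is open in $\PML$. Without the (Adjacency) invariant and this ordering, your openness claim — that ``$P(\eta_1)\cup P(\eta_2)$ fills out a full $3$--dimensional neighbourhood'' near $V(\sigma)$ — is unsupported: it can fail at points of $V(\sigma)$ that sit on the $1$--skeleta of the $P(\eta_i)$'s.
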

\begin{figure}[htbp!]
  \centering
   \psfrag{v1}{$V(\eta_1)$}
   \psfrag{v2}{$V(\sigma)$}
   \psfrag{v3}{$V(\eta_2)$}
   \psfrag{v}{$V$}
  \includegraphics[width=0.5\textwidth]{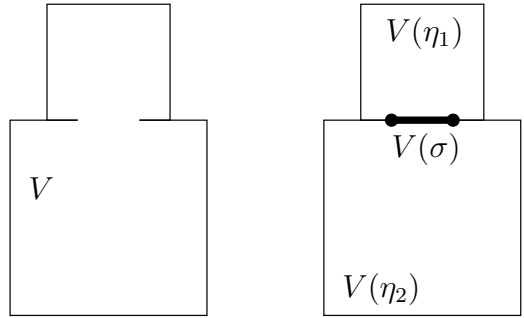}
  \caption{The case where $V = V(\eta_1) \cup V(\sigma) \cup V(\eta_2)$; we draw $V(\sigma)$ thick just to emphasise its presence; note that for simplicity the configuration is depicted in dimension $2$ instead of $3$.}
  \label{fig:triple_neighbourood}
\end{figure}
Before we begin the proof of Theorem~\ref{tracks form basis}, we
record the following.

\begin{rem}
\label{subdiv} If we have a sequence of train track partitions so
that each $(\Tau_{k+1},\Sigma_{k+1})$ is obtained from
$(\Tau_{k},\Sigma_{k})$ by a complete splitting move or a nearly
complete splitting move (see Examples~\ref{examples}(ii,iii)), then
they satisfy property (Subdivision). Moreover, property
(Subdivision) is preserved under passing to a subsequence.
\end{rem}

\begin{defin}
\label{partition sequence} We call sequences $\S = ((\Tau_k,
\Sigma_k))_{k=0}^\infty$ satisfying (Subdivision) and (Fineness)
\emph{good partition sequences}. In this terminology Theorem~\ref{tracks form basis}
says that there exists a good partition sequence.
\end{defin}

\begin{rem}
\label{discarding} If $((\Tau_k, \Sigma_k))_{k=0}^\infty$ is a good
partition sequence, then for any $K\geq 0$ the sequence $((\Tau_k,
\Sigma_k))_{k=K}^\infty$ is also a good partition sequence.
\end{rem}

Properties (Subdivision) and (Fineness) have the following immediate
consequences for the sets $U(\tau)$ in the ending lamination space.

\begin{cor}
  \label{properties of Us}
  Let $\S = ((\Tau_k, \Sigma_k))_{k=1}^\infty$ be a good partition sequence. Then (Subdivision) holds
  after replacing each $V(\tau)$ with $U(\tau)$.
Furthermore, let ${\cal U}(\S) = \{\phi(V \cap \MPML)\}_{V \in {\cal
V}(\S)}$.
  Then ${\cal U}(\S)$ is a neighbourhood basis of $\EL$.
\end{cor}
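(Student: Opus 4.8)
The plan is to derive Corollary~\ref{properties of Us} directly from Theorem~\ref{tracks form basis}, using the fact that the topology on $\EL$ is the quotient topology induced by $\phi$ together with the structural information provided by Lemma~\ref{key lemma}. The corollary has two assertions: that (Subdivision) transfers from the sets $V(\tau)$ to the sets $U(\tau)$, and that the family ${\cal U}(\S)$ is a neighbourhood basis of $\EL$.

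For the first assertion, I would simply apply $\phi$ and intersect with $\MPML$: if $V(\eta)\subset V(\eta')$ then $V(\eta)\cap\MPML\subset V(\eta')\cap\MPML$, and applying $\phi$ gives $U(\eta)\subset U(\eta')$ by definition of $U(\tau)=\phi(V(\tau)\cap\MPML)$; the same works verbatim for $\sigma\in\Sigma_K$ and $\tau\in\Tau_k\cup\Sigma_k$. So (Subdivision) for the $U$'s is immediate and requires no real work.

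For the second assertion I would argue as follows. First, each $U(\tau)$ for $\tau\in\Tau_k$ is open in $\EL$: since $V(\eta)$ is open in $\PML$ for $\eta$ complete, $\phi^{-1}(U(\eta))$ agrees with $V(\eta)\cap\MPML$ up to the complement of $\MPML$... more carefully, one uses that $\phi$ is a quotient map onto $\EL$ when restricted appropriately, so $U(\eta)$ is open because $\phi^{-1}(\phi(V(\eta)\cap\MPML)) = V(\eta)\cap\MPML$ is open in $\MPML$ (this last equality holds because the $V(\tau)$, $\tau\in\Tau_k\cup\Sigma_k$, partition $\MPML$, so a point of $V(\eta)\cap\MPML$ cannot be $\phi$-equivalent to a point outside $V(\eta)$). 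Similarly, for the triple sets $V = V(\eta_1)\cup V(\sigma)\cup V(\eta_2)$, I claim $\phi^{-1}(\phi(V\cap\MPML)) = V\cap\MPML$: by Remark~\ref{no crossing boundaries} and Lemma~\ref{key lemma}(i), any measured lamination $\phi$-equivalent to one supported in $V$ is again supported in $V$ (the interval between two non-transverse measures stays in the union of the relevant polyhedra); hence $\phi(V\cap\MPML)$ is open in $\EL$. Thus every member of ${\cal U}(\S)$ is open.

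Finally, to see that ${\cal U}(\S)$ is a \emph{basis} at an arbitrary ending lamination $\lambda$: given any open neighbourhood $O$ of $\phi(\lambda)$ in $\EL$, the set $\phi^{-1}(O)$ is open in $\MPML$ and contains $\phi^{-1}(\lambda)$, so it extends to an open set $W$ in $\PML$ with $W\cap\MPML = \phi^{-1}(O)$ and $W\supset\phi^{-1}(\lambda)$. Apply (Fineness) to get an open $V\in{\cal V}(\S)$ with $W\supset V\supset\phi^{-1}(\lambda)$. Then $\phi(\lambda)\in\phi(V\cap\MPML)\subset\phi(W\cap\MPML)=O$, and $\phi(V\cap\MPML)\in{\cal U}(\S)$ is open by the previous paragraph. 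The main obstacle is the openness of the triple-neighbourhood sets $\phi(V\cap\MPML)$ in $\EL$; everything hinges on showing $\phi^{-1}(\phi(V\cap\MPML))=V\cap\MPML$, i.e.\ that no ending lamination carried by one of $\eta_1,\sigma,\eta_2$ is $\phi$-equivalent (equal as a lamination, really, since ending laminations carry a unique projective transverse measure up to the relevant identifications — though in general $\phi$ need not be injective one must be careful) to a measured lamination carried by some train track outside this list; this is exactly where the disjointness of the $V(\tau)$ in the partition and Lemma~\ref{key lemma} are used, and it is worth writing out the short argument with Remark~\ref{no crossing boundaries} carefully rather than treating it as routine.
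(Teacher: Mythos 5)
Your proposal is correct. The paper gives no explicit proof of this corollary --- it is introduced with the phrase that (Subdivision) and (Fineness) ``have the following immediate consequences'' --- so what you have done is supply the standard quotient-topology argument that the authors regard as routine. Your two-step structure (openness of each member of $\mathcal{U}(\S)$, then pulling back an open set of $\EL$ to $\PML$ and invoking (Fineness)) is the natural argument and is what the authors implicitly have in mind.

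One point of style. The crux is the $\phi$-saturation of $V\cap\MPML$, i.e.\ $\phi^{-1}\bigl(\phi(V\cap\MPML)\bigr)=V\cap\MPML$, which is what makes $\phi(V\cap\MPML)$ open in the quotient topology. You arrive at this via Remark~\ref{no crossing boundaries} and the interval of measures, and separately via disjointness of the partition. Both routes work, but the cleanest and most direct justification is simply that for any train track $\tau$, membership in $V(\tau)$ depends only on the support of the projective measured lamination: carrying is support-dependent (this is exactly the last sentence of Remark~\ref{no crossing boundaries}), and a measured lamination puts positive weight on a branch if and only if its support passes through that branch under the carrying map, so full carrying is support-dependent as well. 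Hence $V(\tau)\cap\MPML$ is $\phi$-saturated for every $\tau$, and any finite union such as $V(\eta_1)\cup V(\sigma)\cup V(\eta_2)$ is too. This shortcut avoids the slightly awkward appeal to intervals in $\PML$, which in your write-up is doing no more work than the support-dependence observation itself.
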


We denote by $\Psi\colon {\cal U}(\S) \rightarrow {\cal V}(\S)$ the
map extending the map $\Psi(U(\eta))=V(\eta)$ by $\Psi(U(\eta_1)\cup
U(\sigma) \cup U(\eta_2))=\Psi(U(\eta_1))\cup \Psi(U(\sigma)) \cup
\Psi(U(\eta_2))$.

\medskip
The remaining part of this section is devoted to the proof of
Theorem~\ref{tracks form basis}. We need to recall some facts about
full splitting sequences.

Let $b_1, \ldots, b_l$ be the large branches of a train track
$\tau$. Note that, if $\tau'$ is obtained from $\tau$ by a split at
$b_i$, every $b_j$ is still a large branch of $\tau'$ for $j\neq i$.
A \emph{full split of $\tau$} (see \cite[Section 5]{Ham_MCG1}) is a
train track which is obtained from $\tau$ by splitting at each large
branch $b_i$ exactly once (we also say that this train track is \emph{obtained
from $\tau$ by a full split}). A \emph{full splitting sequence} is a
sequence of train tracks $(\tau^i)_i$ such that $\tau^{n+1}$ is
obtained from $\tau^n$ by a full split. For an ending lamination
$\lambda$ carried by $\tau$, a \emph{full $\lambda$--splitting
sequence of $\tau$} is a full splitting sequence $(\tau^{i})_i$ with
$\tau^0 = \tau$ and such that each $\tau^{i}$ carries $\lambda$.

The following immediate consequence of \cite[Theorem 8.5.1]{Mos} is
the central part of the upcoming proof of Theorem~\ref{tracks form
basis}. (A similar theorem is obtained in \cite{Ham_MCG1}.)
\begin{theorem}
  \label{nesting}
  Let $\lambda$ be an ending lamination and let $(\tau^i)_i$ be a full $\lambda$--splitting sequence
  of some train track $\tau$. Then we have
  $$\bigcap_{i=1}^\infty P(\tau^i) = \phi^{-1}(\lambda).$$
In particular, for any open neighbourhood $W$ of
$\phi^{-1}(\lambda)$ in $\PML$, there is some
    $i_0 > 0$ such that for all $i > i_0$ we have $P(\tau^i) \subset W$.
\end{theorem}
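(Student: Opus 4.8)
The plan is to read the stated equality off Mosher's Theorem~8.5.1 and then deduce the ``in particular'' clause by a soft compactness argument. I would first record the easy inclusion $\bigcap_i P(\tau^i)\supseteq\phi^{-1}(\lambda)$: by definition of a full $\lambda$--splitting sequence every $\tau^i$ carries $\lambda$, hence carries every measured lamination supported on $\lambda$ (a projective measured lamination is carried by a train track exactly when its support is), so $\phi^{-1}(\lambda)\subseteq P(\tau^i)$ for every $i$.

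For the reverse inclusion I would take $[\mu]\in\bigcap_i P(\tau^i)$, so that $\phi(\mu)$ is a nonempty geodesic lamination carried by every $\tau^i$, and invoke Mosher's Theorem~8.5.1. For a \emph{full} splitting sequence that theorem tightly constrains which laminations stay carried by all of the $\tau^i$; in our situation it gives that $\phi(\mu)$ does not cross $\lambda$ transversally (one may also quote it so as to produce directly a single limit lamination carried by every $\tau^i$, which by the argument below must be $\lambda$, yielding $\bigcap_i P(\tau^i)=\phi^{-1}(\lambda)$ at once). Since $\lambda$ is an ending lamination it is filling, so its complementary regions are disks and once-punctured disks; being ideal polygons (possibly once-punctured) these contain no leaf of a geodesic lamination in their interiors, and their boundary leaves lie in $\lambda$. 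Hence every leaf of $\phi(\mu)$, being disjoint from $\lambda$, already lies in $\lambda$, i.e.\ $\phi(\mu)\subseteq\lambda$; as $\lambda$ is minimal and $\phi(\mu)\neq\emptyset$ this forces $\phi(\mu)=\lambda$, so $[\mu]\in\phi^{-1}(\lambda)$.

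For the last assertion I would note that a full split is a composition of ordinary splits, each of which only shrinks the polyhedron of projective measures, so $(P(\tau^i))_i$ is a nested sequence of sets that are closed in the compact space $\PML$, hence compact. Given an open $W$ with $W\supseteq\phi^{-1}(\lambda)=\bigcap_i P(\tau^i)$, the compact sets $P(\tau^i)\setminus W$ have empty total intersection, so by the finite intersection property $P(\tau^{i_0})\subseteq W$ for some $i_0$, and then $P(\tau^i)\subseteq P(\tau^{i_0})\subseteq W$ for all $i>i_0$. I expect the only genuine obstacle to be expository: Mosher states Theorem~8.5.1 in the language of measured foliations and train-track expansions, so one must run the standard dictionary to measured geodesic laminations and check that the hypothesis ``full splitting sequence'' matches the efficiency condition Mosher requires; the remaining steps --- the easy inclusion, the minimal-filling argument, and the compactness deduction --- are routine.
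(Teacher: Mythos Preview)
Your proposal is correct and follows the same route as the paper: both derive the theorem from Mosher's Theorem~8.5.1. The paper, however, gives no argument at all beyond declaring the result an ``immediate consequence'' of \cite[Theorem 8.5.1]{Mos} (and noting a parallel statement in \cite{Ham_MCG1}); you have filled in the surrounding details --- the trivial inclusion $\phi^{-1}(\lambda)\subseteq\bigcap_i P(\tau^i)$, the minimal--filling argument reducing the hard inclusion to a non-crossing statement, and the nested-compacta deduction of the ``in particular'' clause --- all of which are sound. Your caveat about translating Mosher's measured-foliation language into the lamination setting is appropriate and is exactly where the paper's ``immediate'' hides the work.
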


\medskip\par\noindent\textbf{Proof of Theorem~\ref{tracks form basis}.} \ignorespaces
  Let $P$ be a pants decomposition for $\Sp$ and let $(\Tau_0, \Sigma_0)$ be the associated standard
  train track partition. We now describe an inductive procedure for building $(\Tau_k,
  \Sigma_k)$, where $k\geq 0$,
  which will satisfy property (Subdivision) and the following two additional properties.
  \begin{description}
  \item[(Adjacency)] If $\mu\in \MPML$ lies in $V(\sigma)$, where $\sigma\in \Sigma_k$ for some $k\geq 0$, then there are
  $\eta\neq\eta'\in \Tau_k$ such that $\mu$ lies in  $P(\eta)\cap P(\eta')$.
  Moreover, the set obtained from $$V(\eta)\cup
  V(\eta')\cup (\partial V(\eta)\cap \partial
  V(\eta'))$$ by removing the $1$--skeleta of $P(\eta)$ and
  $P(\eta')$ is an open neighbourhood of $\mu$ (see Figure~\ref{fig:adjecency}).
\begin{figure}
  \centering
  \includegraphics[width=0.4\textwidth]{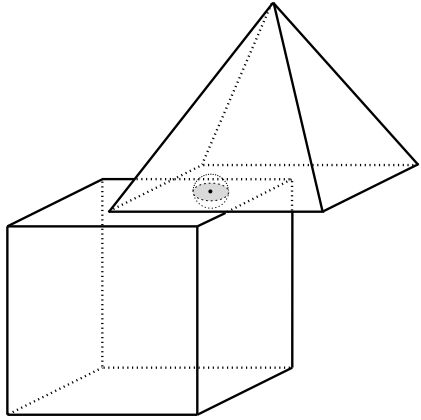}
  \caption{The open neighbourhood as in property (Adjacency).}
  \label{fig:adjecency}
\end{figure}

  \item[(Full splits)] If $\tau \in \Tau_k \cup \Sigma_k$ for some $k \geq 0$ and $\tau'$ is
    a complete or nearly complete train track obtained from $\tau$ by a full split, then
    $\tau'$ belongs to $\Tau_{k+1} \cup \Sigma_{k+1}$.
  \end{description}

Note that the standard train track partition $(\Tau_0, \Sigma_0)$
satisfies property (Adjacency). Moreover, by Lemma~\ref{key
lemma}(ii), a train track partition obtained by a complete splitting
move or a nearly complete splitting move (see Examples
\ref{examples}(ii,iii)) from another train partition satisfying
property (Adjacency), satisfies property (Adjacency) itself.

\medskip

We now describe our inductive procedure. Suppose that the train
track partition $(\Tau_k, \Sigma_k)$ has already been defined.
Roughly speaking we now perform the following operation. For each
$\eta \in \Tau_k$ we use complete splitting moves along all large
branches of $\eta$ to obtain $(\Tau_k', \Sigma_k')$. In the second
step we perform nearly complete splitting moves for each $\sigma \in
\Sigma_k$ along all large branches of $\sigma$ to obtain
$(\Tau_{k+1}, \Sigma_{k+1})$.

More precisely, let $\eta \in \Tau_k$ be a complete train track.
Denote the large branches of $\eta$ by $b_1, \ldots, b_l$. We now
perform a complete splitting move along $b_1$ to obtain a new
partition $(\Tau^1_k, \Sigma_k^1)$. The set $\Tau^1_k$ contains one
or two train tracks corresponding to the two possible splits of
$\eta$ along $b_1$. Each of those still contains $b_2, \ldots, b_l$
as large branches. We perform complete splitting moves along $b_2$
for those (one or two) train tracks in $\Tau^1_k$ to obtain the partition $(\Tau_k^2,
\Sigma_k^2)$. The set $\Tau_k^2$ contains now up to four train
tracks corresponding to the possible splits of $\eta$ along $b_1$
and $b_2$. We split all these (up to four) train tracks along $b_3$ and continue
this way for all large branches $b_1, \ldots, b_l$ until we
terminate with $(\Tau_k^l, \Sigma_k^l)$.

Note that this partition now contains every full split $\eta^1$ of
$\eta$, if $\eta^1$ is a complete train track. Moreover, we have $\Tau_k \setminus \{\eta\}\subset
\Tau^l_k$. We now repeat the same procedure for all $\eta' \in
\Tau_k\setminus\{\eta\}$. The resulting partition $(\Tau_k',
\Sigma_k')$ contains for every $\eta \in
\Tau_k$ all of its full splits which are complete.

In the second step, we obtain $(\Tau_{k+1}, \Sigma_{k+1})$ from
$(\Tau'_k, \Sigma'_k)$ by performing the analogous operation with
all elements of $\Sigma_k\subset \Sigma'_k$. Note that
$\Tau_{k+1}=\Tau'_k$. This completes the definition of
$\mathcal{S}=((\Tau_k, \Sigma_k))_{k=0}^\infty$.

\medskip

Since all $(\Tau_k, \Sigma_k)$ are obtained from $(\Tau_0,
\Sigma_0)$ by a sequence of complete splitting moves and nearly complete splitting
moves, $\S$ satisfies property (Adjacency). By Remark~\ref{subdiv},
$\S$ satisfies also property (Subdivision). Furthermore, by
construction, $\S$ satisfies property (Full splits). We now use this
information to derive property (Fineness).

\medskip

Let $\lambda \in \EL$, and let $W \subset \PML$ be any open set
containing $\phi^{-1}(\lambda)$. Let $(\eta^i)_i$ be a full
$\lambda$--splitting sequence of some $\eta^0\in \Tau_0$ carrying
$\lambda$. By property (Full splits), we have $\eta^k\in \Tau_k$. By
Theorem~\ref{nesting}, for sufficiently large $k$ we have
$V(\eta^k)\subset W$. Hence if $\phi^{-1}(\lambda)\subset
V(\eta^k)$, we are done. Otherwise, $\phi^{-1}(\lambda)$ is
contained in $\partial V(\eta^k)$.

Then the strategy is roughly speaking the following. We consider the
polyhedron $P(\eta'^k)$ ``on the other side'' of the face of
$P(\eta^k)$ containing $\phi^{-1}(\lambda)$. We then split $\eta'^k$
until $P(\eta'^K)$  is contained in $W$. The face of $P(\eta'^K)$
containing $\phi^{-1}(\lambda)$ might not itself be a train track
occurring in our partition sequence. However, there is some nearly
complete train track $\sigma^K \in \Sigma^K$ carrying $\lambda$. We
split $\sigma^K$ until its polyhedron of projective measures lies
in the interior of the appropriate faces of both $P(\eta^K)$ and
$P(\eta'^K)$. Then the resulting three train tracks define the
required neighbourhood.

More precisely, by property (Adjacency), there is $\eta'^k\in \Tau_k$
with $\phi^{-1}(\lambda)\subset P(\eta^k)\cap P(\eta'^k)$. By
Theorem~\ref{nesting} and property (Full splits), there are some
$K\geq k$ and $\eta'^K\in \Tau_{K}$ with $\phi^{-1}(\lambda)\subset
P(\eta'^K)\subset W$.
Let $\sigma^K \in \Sigma_K$ be the nearly complete train track carrying $\lambda$.
By property
(Adjacency), the set $N$ obtained from
$$V(\eta^K)\cup V(\eta'^K)\cup (\partial
V(\eta^K)\cap
\partial V(\eta'^K))$$
by removing the $1$--skeleta of $P(\eta^K)$ and $P(\eta'^K)$, is an
open neighbourhood of $\phi^{-1}(\lambda)$.

By Theorem~\ref{nesting} and property (Full splits), there is some
$L\geq 0$ and $\sigma^L\in \Sigma_{L}$ with $\phi^{-1}(\lambda)\subset
P(\sigma^L)\subset N$. By Lemma~\ref{key lemma}(i), we have
$\phi^{-1}(\lambda)\subset V(\sigma^L)$. We put $V=V(\eta^K)\cup
V(\sigma^L)\cup V(\eta'^K)$. Then we have $\phi^{-1}(\lambda)\subset
V\subset W$.

Since $P(\sigma^L)$ is $2$--dimensional in $\partial V(\eta^K)$ and
$\partial V(\eta'^K)$, we have that $V(\sigma^L)$ is open in
$\partial V(\eta^K)\cap \partial V(\eta'^K)$. Hence each point of
$V(\sigma^L)$ lies in the interior of $V$, and we conclude that $V$ is
open, as desired. \qed

\section{Almost filling paths}
\label{section Almost filling paths} In this section we give some
account on Gabai's method of constructing paths in $\EL$. This
discussion is valid for any surface $S_{g,p}$ with $\xi=3g-3+p\geq
2$. Gabai's main result is the following.

\begin{theorem}[{\cite[Theorem 0.1]{G}}]
\label{theorem gabai}
$\EL$ is connected, path connected and cyclic.
\end{theorem}

Here we give some details on Gabai's construction, which we need in the proof of Theorem
\ref{main}. Recall \cite{G} that a geodesic lamination $\lambda$
is \emph{almost minimal almost filling} if it has the form $\lambda=\lambda^*\cup\gamma$ where $\lambda^*$ has no isolated leaves, the closed (with respect to the path metric) complement of $\lambda^*$ supports at most one simple
closed geodesic, and $\gamma$ is either this geodesic or is empty.
We denote by $\AML\supset \EL$ the set of all almost minimal almost
filling geodesic laminations.

Gabai uses \emph{PL almost filling} paths $h\colon
I=[0,1]\rightarrow \PML$ satisfying $\phi(h(t))\in \AML,\
\phi(h(0)),\phi(h(1))\in \EL$ and some additional properties
satisfied by generic PL paths. We do not recall these properties,
since we use only the combination of the following results. We
assume that $\PML$ is equipped with a fixed metric, and we say that
two points in $\PML$ are \emph{$\e$--close} if they are distance at
most $\e$ in this metric.

\begin{lemma}[{\cite[Lemma 2.9]{G}}]
\label{existence of pl filling paths} Let $h\colon I\rightarrow
\PML$ be a path with $\phi(h(0)),\phi(h(1))\in \EL$. Then for
any $\e>0$ there is a PL almost filling path $h'\colon I\rightarrow
\PML$ with the same endpoints and such that $h'(t)$ is $\e$--close to $h(t)$, for
all $t\in I$.
\end{lemma}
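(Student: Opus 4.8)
\medskip\par\noindent\textbf{Proof idea.} This is a general position statement, and the plan is the following. First I would fix the natural piecewise-integral-linear structure on $\PML=\PML(\Sp)\cong S^3$ coming from train tracks: the polyhedra $P(\tau)$ of a standard train track partition (Example~\ref{examples}(i)) already give a polyhedral cell structure on $\PML$, and all the loci occurring below can be arranged to be subcomplexes after suitable subdivision. Given this, any continuous $h\colon I\to\PML$ is approximated rel endpoints by a PL path $h_1$ with $h_1(t)$ within $\tfrac{\e}{2}$ of $h(t)$ for every $t$: subdivide $I$ finely, interpolate linearly in charts using uniform continuity of $h$, and arrange the first and last segments to issue from $h(0)$ and $h(1)$. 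Since $\phi(h(0)),\phi(h(1))\in\EL\subset\AML$, the endpoints already have almost minimal almost filling support, so they need not move.

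The heart of the argument is to understand the \emph{bad set} $\mathcal B=\PML\setminus\phi^{-1}(\AML)$ of projective measured laminations whose support fails to be almost minimal almost filling, and to show it is a countable union of PL subsets of $\PML$ of dimension at most $\dim\PML-2$. Here surface topology enters exactly as in Lemma~\ref{key lemma}. A measured lamination has no non-closed isolated leaves, so the support of any $\mu\in\PML$ is the disjoint union of its finitely many minimal sublaminations, i.e.\ of finitely many simple closed geodesics $c_i$ together with minimal non-closed sublaminations $\phi(\mu^*_j)$; each $\mu^*_j$ fills a subsurface of complexity at least one. On $\Sp$ such a subsurface is an $S_{0,4}$--piece bounded by a single curve, no two of them are disjoint, and the complement of one of them is a pair of pants carrying exactly one closed geodesic; hence any support containing a non-closed minimal sublamination is automatically almost minimal almost filling. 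What remains in $\mathcal B$ are the weighted multicurves, and these form countably many PL cells of dimension at most $1=\dim\PML-2$. For a general surface with $\xi\ge 2$ the analogous count is Gabai's, and again yields $\dim\mathcal B\le\dim\PML-2$.

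Finally I would perturb $h_1$, rel endpoints, within PL paths and within $\tfrac{\e}{2}$, to a PL path $h'$ whose image avoids $\mathcal B$. This is PL general position: the image of a PL path is $1$--dimensional, each cell of $\mathcal B$ has dimension at most $\dim\PML-2$, and their dimensions add to less than $\dim\PML$, so for each such cell a generic small perturbation of $h_1$ misses it; a Baire category argument over the countably many cells, in the compact space of PL paths with a fixed fine domain subdivision lying $\tfrac{\e}{2}$--close to $h_1$, produces a single $h'$ missing all of $\mathcal B$, and the endpoints stay fixed since they lie off $\mathcal B$. Then $\phi(h'(t))\in\AML$ for all $t$, $\phi(h'(0)),\phi(h'(1))\in\EL$, and $h'(t)$ is $\e$--close to $h(t)$; the further (unnamed) genericity properties of a PL almost filling path --- transversality to the codimension--one walls where an extra closed curve appears, meeting each such wall finitely often and in its interior, and so on --- are obtained by one more arbitrarily small PL perturbation that keeps the path off $\mathcal B$. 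The main obstacle is the middle step: identifying precisely which supports are not almost minimal almost filling and packaging $\mathcal B$ as a tame, codimension--at--least--two family, which on $\Sp$ is the same Euler characteristic bookkeeping as in Lemma~\ref{key lemma} but which must be made uniform in $S_{g,p}$.
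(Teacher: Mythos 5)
The paper does not prove this lemma: it is quoted verbatim as \cite[Lemma~2.9]{G}, and the authors explicitly say they use it (together with \cite[Lemma~5.1]{G}) as a black box, without recalling Gabai's notion of PL almost filling path in detail. So there is no ``paper's own proof'' to compare against; what you have written is a reconstruction of Gabai's argument, and I will assess it on those terms.

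Your overall strategy --- PL-approximate $h$ rel endpoints, describe the bad set $\mathcal{B}=\PML\setminus\phi^{-1}(\AML)$ as a countable union of polyhedral subsets of codimension at least two, then push the path off $\mathcal{B}$ by PL general position and a Baire argument --- is exactly the spirit of Gabai's proof, and your analysis on $\Sp$ is correct. On $\Sp$ every proper essential subsurface of complexity $\geq 1$ is a thrice-punctured disc whose complement is a pair of pants supporting only the boundary curve, so any lamination with a non-closed minimal piece is automatically in $\AML$, and $\mathcal{B}$ is precisely the set of projective weighted multicurves, a countable union of simplices of dimension $\leq 1$ inside a $3$--dimensional $\PML$. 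This is the same Euler-characteristic bookkeeping as Lemma~\ref{key lemma}, as you observe.

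The genuine gap is the general-surface case, which you defer with ``the analogous count is Gabai's.'' Two things deserve care there. First, for $\xi\ge 3$ the bad set is no longer just multicurves: it also contains laminations $\lambda^*\cup C$ where $\lambda^*\neq\emptyset$ fills a proper subsurface $R$ whose complement supports two or more curves, and for a fixed $R$ there are uncountably many such $\lambda^*$. So $\mathcal{B}$ is not literally a countable union of \emph{cells}; it must be organized as a countable union over isotopy classes of pairs $(R,C)$ of the polyhedral sets of laminations supported in $R\cup C$, which are joins $\PML(R)*\PML(C)$. Second, the codimension bound is not automatic: a naive bound $\dim(\PML(R)*\PML(C))=2\xi(R)+|C|-1$ is codimension one when $\xi(R)=\xi-1$ and $|C|=1$. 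What saves the argument is that precisely those pairs do \emph{not} meet $\mathcal{B}$, because then the complement of $R$ supports at most one curve; the pairs that do contribute satisfy $\xi-\xi(R)\ge 2$ (or $R=\emptyset$), and plugging $|C|\le\xi-\xi(R)$ into the join formula gives codimension $\ge 2$. You should make this case analysis explicit rather than appeal to Gabai, since it is exactly the ``main obstacle'' you name. The remaining points --- endpoints lying off $\mathcal{B}$ because $\EL\subset\AML$, the Baire argument in the compact space of PL paths with a fixed subdivision, and the final small perturbation achieving the unnamed generic-transversality conditions --- are fine as sketches.
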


We now fix a hyperbolic metric on $S_{g,p}$ and consider geodesic
laminations as subsets of the projective tangent bundle of
$S_{g,p}$. 
The hyperbolic metric on $S_{g,p}$ induces a natural (Sasaki) metric on the
projective tangent bundle.
For a geodesic lamination $\lambda$, we denote by
$N^{PT}_\e(\lambda)$ its $\e$--neighbourhood in this metric. The key
element of the proof of Theorem~\ref{theorem gabai} is the following
crucial result.

\begin{lemma}[{\cite[Lemma 5.1]{G}}]
\label{Gabai} If $h\colon I\rightarrow \PML$ is a PL almost filling
path, $\e>0,\delta>0$, then there exists a path $g\colon I
\rightarrow \EL$ with $g(0)=\phi(h(0)),\ g(1)=\phi(h(1))$ such that
for each $t\in [0,1]$ there exists $s\in I$ with $|s-t|<\delta$
satisfying $$h(s)^*\subset N^{PT}_\e(\tilde{g}(t)),$$ for some
diagonal extension $\tilde{g}(t)$ of $g(t)$.
\end{lemma}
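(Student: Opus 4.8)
The plan is to shadow the path $h$ by a path in $\EL$ assembled over a sufficiently fine partition of $I$; this is the content of \cite[Lemma 5.1]{G}, which I only outline. First I would exploit that $h$ is PL almost filling: among the properties of generic PL paths is that there are only finitely many parameters away from which the topological type of $\phi(h(t))$ is locally constant and $h(t)^*$ is carried by a fixed train track. Using this, I would choose a partition $0=t_0<t_1<\dots<t_N=1$ with $t_{j+1}-t_j<\delta$, fine enough that on each subinterval $[t_j,t_{j+1}]$ all the supports $h(t)^*$ are carried by one common train track $\tau_j$; after splitting $\tau_j$ sufficiently many times, I would moreover arrange that every lamination carried by $\tau_j$ lies in $N^{PT}_{\e/2}(\tau_j)$, and that $\tau_j$ lies in the $\e/2$--neighbourhood of each of its fully carried filling sublaminations.

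The second step resolves the almost filling laminations at the partition points into genuine ending laminations. Write $\phi(h(t_j))=\mu^*\cup\gamma$; by definition of $\AML$, the lamination $\mu^*=h(t_j)^*$ has no isolated leaves and its path-metric complement carries at most the single closed geodesic $\gamma$, so its complementary regions are ideal polygons, once-punctured ideal polygons, or the region carrying $\gamma$. Inserting finitely many leaves into these regions --- diagonals of the polygons, together with a leaf spiralling onto $\gamma$ when $\gamma$ is present --- and passing to a suitable minimal filling sublamination produces an ending lamination $g_j\in\EL$, still fully carried by $\tau_j$, such that $h(t_j)^*$ agrees with a sublamination of some diagonal extension $\tilde g_j$ of $g_j$ up to the small adjustment made in closing up the complementary regions; in particular $h(t_j)^*\subset N^{PT}_{\e/2}(\tilde g_j)$.

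It remains to connect consecutive $g_j$ inside $\EL$ and verify the shadowing estimate. Both $g_j$ and $g_{j+1}$ are carried by $\tau_j$, so after a further split I would join them by a path in $\EL$ whose laminations are all fully carried by a refinement of $\tau_j$, hence all $\e/2$--close in the Sasaki metric to $h(t_j)^*$ by the choice of $\tau_j$. Concatenating over $j$ gives $g\colon I\to\EL$ with $g(0)=\phi(h(0))$ and $g(1)=\phi(h(1))$. For $t\in[t_j,t_{j+1}]$ one takes $s=t_j$, so $|s-t|<\delta$; interpolating the diagonal extensions continuously over the subinterval --- possible since the combinatorial type of the complement is constant there and the extension equals $\tilde g_j$ at $t=t_j$ --- keeps $\tilde g(t)$ fully carried by $\tau_j$, and the estimates arranged in the first step then force $h(s)^*=h(t_j)^*\subset N^{PT}_\e(\tilde g(t))$.

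The main obstacle is the resolution together with the connecting step. A path in $\PML$ genuinely meets the stratum $\AML\setminus\EL$ and cannot be perturbed off it, so one must replace each almost filling lamination by a bona fide ending lamination and, crucially, do so coherently along a path --- keeping minimality and filling at every instant while controlling the Sasaki distance to the original laminations. Making this precise, rather than the essentially bookkeeping steps above, is the technical core of \cite{G}, and I would follow that construction.
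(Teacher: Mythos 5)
The paper itself offers no proof of Lemma~\ref{Gabai}; it is quoted as \cite[Lemma 5.1]{G} and used as a black box, so there is no internal argument to compare your sketch against. Assessed as a reconstruction of Gabai's proof, your sketch has two genuine gaps.

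First, the resolution step does not produce an ending lamination. Given $\phi(h(t_j))=\mu^*\cup\gamma\in\AML\setminus\EL$, inserting diagonal leaves into the complementary regions (plus a leaf spiralling onto $\gamma$) gives a filling but non-minimal lamination whose added leaves are isolated, and its minimal sublaminations are exactly those of $\mu^*$, none of which is filling, since $\mu^*$ itself is not. So ``passing to a suitable minimal filling sublamination'' yields nothing; no $g_j\in\EL$ arises from this edit. Gabai instead approximates the almost filling lamination by genuinely different ending laminations obtained as limits of a recursive construction, not by finitely modifying $\mu^*$. Second, the step ``join $g_j$ and $g_{j+1}$ by a path in $\EL$ whose laminations are all fully carried by a refinement of $\tau_j$'' is circular: producing controlled paths in $\EL$ between two ending laminations is precisely what Lemma 5.1 is needed for, and path connectedness of $\EL$ (Theorem~\ref{theorem gabai}) is deduced from this lemma, not available as a tool in its proof. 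Gabai never connects two ending laminations in one step; he builds a nested sequence of finer PL approximations in $\PML$, uses super-convergence of supports to control the Sasaki distance, and obtains $g$ as a limit, verifying continuity and the shadowing estimate afterwards. Your closing paragraph rightly flags ``the resolution together with the connecting step'' as the crux, but by deferring it wholesale to \cite{G} the proposal is effectively a citation rather than a proof --- which, to be fair, is exactly how the paper itself handles this lemma.
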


We also need the following lemma, which roughly says that for $h$ and $g$
as in the assertion of Lemma~\ref{Gabai}, the preimage
$\phi^{-1}(g(I))$ is not far away from $h(I)$ in $\PML$. We restrict
to the case of the five-punctured sphere (although a version of this
result is true in general). This way we may choose a good partition
sequence $\S$ (see Definition~\ref{partition sequence}). We denote
by $\mathcal{V}(\S),\,\mathcal{U}(\S)$ its associated families of
open sets in $\PML$ and $\EL$, respectively (see Theorem~\ref{tracks
form basis} and Corollary~\ref{properties of Us}).

\begin{lemma}
\label{local perturbing} Let $\lambda_0\in U\in \mathcal{U}(\S)$.
Then there is $U'\in \mathcal{U}(\S)$ with $\lambda_0\in U'$ and
$\e>0$ satisfying the following.
\item(i)
Let $\mu\in \Psi(U')$ with $\phi(\mu)\in \AML$. If
$\phi(\mu)^*$ lies in $N^{PT}_\e(\tilde{\lambda})$ for some diagonal extension
$\tilde{\lambda}$ of $\lambda\in \EL$, then $\lambda\in U$.
\item(ii)
Let $\lambda\in U'$, and $\mu\in \PML$ with $\phi(\mu)\in \AML$.
If $\phi(\mu)^*$ lies in $N^{PT}_\e(\tilde{\lambda})$ for some diagonal
extension $\tilde{\lambda}$ of $\lambda$, then $\mu\in \Psi(U)$.
\end{lemma}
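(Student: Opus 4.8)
The plan is to exploit the fact that the neighbourhood basis $\mathcal{U}(\S)$ is defined via train tracks, so membership in $U$ and membership in $\Psi(U)$ are controlled by a carrying condition, which is an ``open'' condition that behaves well under the kind of Hausdorff-closeness appearing in Lemma~\ref{Gabai}. First I would fix $\lambda_0 \in U$ and choose, using that $\mathcal{U}(\S)$ is a neighbourhood basis (Corollary~\ref{properties of Us}) together with property (Fineness), an element $U'' \in \mathcal{U}(\S)$ with $\overline{\Psi(U'')} \subset \Psi(U)$ in $\PML$; then shrink once more to get $U' \in \mathcal{U}(\S)$ with $\lambda_0 \in U'$ and $\overline{\Psi(U')} \subset \Psi(U'')$. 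Recall $\Psi(U)$ is a finite union of interiors $V(\tau)$ of polyhedra of projective measures, so $\overline{\Psi(U)}$ is a finite union of the compact polyhedra $P(\tau)$, $\tau \in \Tau \cup \Sigma$; the complement $\PML \setminus \Psi(U)$ is thus carried by none of these, and more importantly a projective measured lamination lies in $\overline{\Psi(U)}$ if and only if its support is carried by one of the finitely many relevant train tracks.

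The key point, which I would isolate as the main technical step, is the following semicontinuity statement: if a train track $\tau$ carries a lamination $\kappa$, then it carries every geodesic lamination whose support lies in a sufficiently small $N^{PT}_\e$-neighbourhood of (a diagonal extension of) $\kappa$. This is because carrying by $\tau$ means the lamination can be isotoped into a fixed tie neighbourhood $N(\tau)$ with all leaves transverse to the ties, and a lamination $PT$-close to $\kappa$ has all its leaves nearly tangent to leaves of $\kappa$, hence also transverse to the ties of $N(\tau)$ once the metrics are compared; diagonal extensions only add finitely many leaves spiralling into $\kappa$, which are carried as well after slightly enlarging $N(\tau)$. Running this over the finitely many train tracks in $\Tau \cup \Sigma$ appearing in $\Psi(U)$ and in $\Psi(U')$, and using compactness of the polyhedra to get a uniform $\e$, I would extract a single $\e > 0$ that works for all of them simultaneously.

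With $U'$ and $\e$ in hand, both assertions follow. For (ii): if $\lambda \in U'$ and $\phi(\mu)^* \subset N^{PT}_\e(\tilde\lambda)$, then since $\mu' := $ the projective class with support $\lambda$ lies in $\Psi(U') \subset \Psi(U'')$, whichever train track $\tau$ (complete or nearly complete, appearing in $\Psi(U'')$) fully carries $\lambda$ also carries $\phi(\mu)^*$ by the semicontinuity step; since $\phi(\mu) \in \AML$ is $\phi(\mu)^*$ together with at most one extra simple closed curve in a complementary region, a further small enlargement absorbs that curve too (or one argues directly that $\phi(\mu)$, being carried, lies in $P(\tau) \subset \overline{\Psi(U)}$, and then Remark~\ref{no crossing boundaries} applied to the segment between $\mu$ and $\mu'$ — which do not cross — forces $\mu$ into the same $V(\tau)$, i.e.\ $\mu \in \Psi(U)$). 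For (i): given $\mu \in \Psi(U')$ with $\phi(\mu)^* \subset N^{PT}_\e(\tilde\lambda)$, the same semicontinuity step shows the train track carrying $\phi(\mu)$ also carries $\lambda$, so $\lambda \in \overline{U}$ in $\EL$; to upgrade ``closure'' to ``$\lambda \in U$'' I would use that by Lemma~\ref{key lemma} the relevant boundary pieces $\partial V(\sigma)$ and the $1$-skeleta of $\partial V(\eta)$ contain no filling laminations, hence no ending laminations, so an ending lamination carried by $\tau$ actually lies in $V(\tau)$ rather than merely in $P(\tau)$ — this is exactly the mechanism already used in the proof of Lemma~\ref{key lemma}(iii) and in the last paragraph of the proof of Theorem~\ref{tracks form basis}. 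The main obstacle is the semicontinuity step: making precise the comparison between the $PT$-metric neighbourhood condition supplied by Gabai's Lemma~\ref{Gabai} and the combinatorial condition of being carried by a train track, uniformly over a finite set of train tracks and over all diagonal extensions; once that is done, everything else is bookkeeping with the already-established properties of $\mathcal{U}(\S)$.
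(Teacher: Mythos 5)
Your plan diverges substantially from the paper's, and I don't think it goes through as written. In the paper, part (i) is not reproved at all --- it is cited directly as being established in the course of Gabai's proof of local path connectedness (\cite[Theorem 6.1]{G}). Part (ii) is proved by a compactness/angle argument, not by a carrying argument: one chooses nested $V_1, V_2, V'\in\mathcal{V}(\S)$ with $\overline{V_1}\subset\Psi(U)$, $\overline{V_2}\subset V_1$, $\overline{V'}\subset V_2$, and sets $U'=\phi(V'\cap\MPML)$. The contrapositive is taken: if $\mu\notin\Psi(U)$, one first produces (by restricting the measure to $\phi(\mu)^*$ and invoking Remark~\ref{no crossing boundaries}) a projective class $\nu\notin V_1$ with support $\phi(\mu)^*$; Remark~\ref{no crossing boundaries} then shows supports of points of $\overline{V'}$ and of $\PML\setminus V_1$ cross transversally, and Gabai's ``super convergence of supports'' (\cite[Proposition 3.2(i)]{G}), applied to the two compact sets, gives a uniform lower bound $\delta$ on the maximal intersection angle; for $\e$ small this angle bound is incompatible with $\phi(\mu)^*\subset N^{PT}_\e(\tilde\lambda)$. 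No ``semicontinuity of carrying'' is invoked anywhere.

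There are two concrete problems with what you propose. First, the ``semicontinuity'' step --- which you yourself flag as the main obstacle --- is not established, and I do not think it is true in the form stated: the condition $\kappa'\subset N^{PT}_\e(\tilde\kappa)$ is one-sided (it bounds where the leaves of $\kappa'$ lie, not where the leaves of $\tilde\kappa$ lie), and a diagonal extension $\tilde\kappa$ contains additional leaves that need not be carried by a train track carrying $\kappa$; a lamination $\e$-close to such a leaf can exit any fixed tie neighbourhood of $\tau$. The argument that one can ``slightly enlarge $N(\tau)$'' would change $\tau$ and hence change $\Psi(U)$, which is exactly what one is not allowed to do. Second, and independently, your application of semicontinuity in part (i) reverses the direction of the hypothesis. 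Your statement is: if $\tau$ carries $\kappa$ and $\kappa'$ lies in $N^{PT}_\e(\tilde\kappa)$, then $\tau$ carries $\kappa'$. To deduce that $\tau$ (which carries $\phi(\mu)$) also carries $\lambda$, you would need $\lambda\subset N^{PT}_\e(\widetilde{\phi(\mu)})$, but the hypothesis of Lemma~\ref{local perturbing} supplies the opposite containment $\phi(\mu)^*\subset N^{PT}_\e(\tilde\lambda)$. You would first have to upgrade the one-sided estimate to a two-sided one (using that both $\lambda$ and $\phi(\mu)^*$ are filling or almost filling), a nontrivial step you do not address. Finally, the alternative argument you give in (ii) --- ``$\phi(\mu)$, being carried, lies in $P(\tau)$, and then Remark~\ref{no crossing boundaries} forces $\mu\in V(\tau)$'' --- does not follow: Remark~\ref{no crossing boundaries} pulls an interval into a closed polyhedron $P(\tau)$, it never promotes membership in $P(\tau)$ to membership in the interior $V(\tau)$, and it is not even clear that $\tau$ carries $\phi(\mu)=\phi(\mu)^*\cup\gamma$ rather than only $\phi(\mu)^*$. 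The paper sidesteps both difficulties: the ``Claim'' in its proof drops $\gamma$ and moves the problem outside a smaller neighbourhood $V_1$, and the final step uses the angle bound, not carrying, so no promotion from boundary to interior is needed.
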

\proof Part (i) is proved in course of the proof of \cite[Theorem
6.1]{G} (local path connectedness of $\EL$).

For part (ii), let $V=\Psi(U)$. By Theorem~\ref{tracks form
basis}(Fineness), there are neighbourhoods $V_1,V_2,V'\in
\mathcal{V}(\S)$ of $\phi^{-1}(\lambda_0)$ satisfying
$\overline{V_1}\subset V, \ \overline{V}_2\subset V_1$, and
$\overline{V}'\subset V_2$ (see Figure~\ref{fig:nesting}).
\begin{figure}
 \centering
  \scalebox{0.75}{
   \psfrag{v1}{$V_1$}
   \psfrag{v2}{$V_2$}
   \psfrag{v}{$V$}
   \psfrag{vv}{$V'$}
   \psfrag{phi}{$\phi^{-1}(\lambda)$}
   \psfrag{mu}{$\mu$}
   \psfrag{nu}{$\nu$}
  \includegraphics[width=0.6\textwidth]{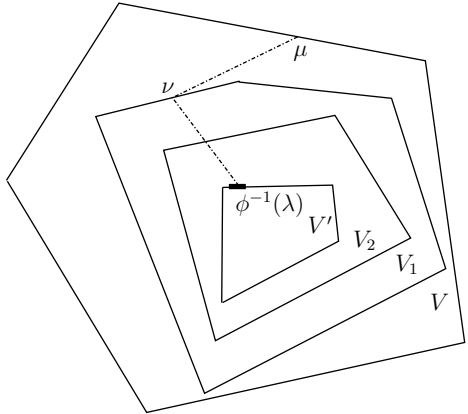} }
  \caption{Nested neighbourhoods with possible position of $\mu$ and $\phi^{-1}(\lambda)$.}
  \label{fig:nesting}
\end{figure}
We prove that $U'=\phi(V'\cap \MPML)$ satisfies the assertion of the
lemma.

First we claim that if we have $\mu\in \PML \setminus V$ with
$\phi(\mu)\in \AML$, then there is a projective measured lamination
$\nu\in\PML\setminus V_1$ with support $\phi(\mu)^*$. Indeed, if
$\phi(\mu)$ is an ending lamination, then we can take $\nu=\mu$.
Otherwise we have $\phi(\mu)=\phi(\mu)^*\cup \gamma$ for some simple
closed geodesic $\gamma$. Let $\nu$ be the projective measured
lamination with support $\phi(\mu)^*$ obtained by restricting the
measure $\mu$ to $\phi(\mu)^*$. By Remark~\ref{no crossing
boundaries}, the interval of projective measured laminations
determined by $\nu$ and $\gamma$ is contained in $\PML\setminus
V_1$. This justifies the claim.

By Remark~\ref{no crossing boundaries}, the supports of any pair of
projective measured laminations in $\overline{V}'$ and
$\PML\setminus V_1$ intersect transversally. Observe that
$\overline{V}'$ and $\PML\setminus V_1$ are compact in $\PML$. By
super convergence of supports (\cite[Proposition 3.2(i)]{G}), there
is $\delta>0$ satisfying the following. For any $\lambda\in
\phi(\overline{V}'\cap \MPML)$ and $\nu\in\PML\setminus V_1$, the
maximal angle of intersection between $\lambda$ and $\phi(\nu)$ is
at least $\delta$. If we pick $\e$ sufficiently small, and we have
$\phi(\nu)=\phi(\mu)^*$, this violates $\phi(\mu)^*\in
N^{PT}_\e(\tilde{\lambda})$, for any diagonal extension
$\tilde{\lambda}$ of $\lambda$. \qed

\section{Proof of the main theorem}
\label{section Outline of the proof}

Our goal is to prove Theorem~\ref{main}, i.e.\ to verify that
$\mathcal{EL}(\Sp)$ is homeomorphic to the N\"obeling curve. By
Theorem~\ref{characterization_easy}, in order to do this we must
show that $\EL$ is a Polish space, that it is connected, locally
path connected, of dimension $1$, and satisfies the locally finite
$1$--discs property.

To see that $\EL$ is separable note that the orbit of any ending
lamination under the action of the mapping class group is dense in
$\EL$ (see e.g.\ \cite[Corollary 4.2]{Ham2}). Because the mapping
class group is finitely generated, this orbit is countable. Since
$\EL$ is homeomorphic to the Gromov boundary of the curve complex
(\cite[Theorem 1.3]{Kla}, compare \cite[Section 1]{Ham}), it carries
a metric defined using the Gromov product (see Bridson--Haefliger
\cite[Chapter III.H]{BH} and Bonk--Schramm \cite[Section 6]{BS}).
This metric is complete by \cite[Proposition 6.2]{BS}. Hence $\EL$
is a Polish space.

\medskip

By Theorem~\ref{theorem gabai}, $\EL$ is connected and locally path
connected.

\medskip

Now we prove that $\EL$ is of dimension $1$. Since there are paths
in $\EL$ (Theorem~\ref{theorem gabai}), it is not of dimension $0$.
In order to prove that $\EL$ is of dimension at most $1$, we need to
check that any point in $\EL$ has a neighbourhood basis with
boundaries of dimension $0$. Let $\S=((\Tau_k,\Sigma_k))_k$ be a
good partition sequence, guaranteed by Theorem~\ref{tracks form
basis}. Thus, by Corollary~\ref{properties of Us}, it is enough to
prove that the boundary of any $U\in\mathcal{U}(\S)$ is of dimension $0$.

The boundary of any $U\in\mathcal{U}(\S)$ is contained in a union of
boundaries of up to three $U(\tau)$'s, where $\tau\in \Tau_k\cup
\Sigma_k$, for some $k$'s. For $\sigma\in \Sigma_k$ the set
$U(\sigma)$ is closed (Lemma~\ref{key lemma}(iii)), so that
$\mathrm{Fr}\, U(\sigma)\subset U(\sigma)$. For fixed $k$, all
$U(\eta)$ with $\eta\in \Tau_k$ are open and disjoint. Hence, if we
denote $X_k=\bigcup_{\sigma\in\Sigma_k}U(\sigma)$, we have
$\mathrm{Fr }\,U(\eta)\subset X_k$. By property (Subdivision), for
all $K\geq k$ we have $X_K\supset X_k$. Hence the boundary of any
$U\in\mathcal{U}(\S)$ is contained in $X_K$, for sufficiently large
$K$. Thus it is enough to prove that for each $k$ the set
$X_k=\bigcup_{\sigma\in\Sigma_k} U(\sigma)\subset \EL$ is of
dimension $0$.

We obtain a neighbourhood basis for each $\lambda\in X_k$ by
restricting to $X_k$ the sets $U'\ni \lambda$ from the open
neighbourhood basis $\mathcal{U}(\S)$. By Remark~\ref{discarding},
we may assume that $U'=U(\eta)$ or $U'=U(\eta_1)\cup U(\sigma) \cup
U(\eta_2)$ where $\eta$ or all of $\eta_1, \sigma, \eta_2$ lie in
$\Tau_K$'s and $\Sigma_K$'s with $K$'s at least $k$. Since for
$K\geq k$ we have $\EL\setminus X_K\subset \EL \setminus X_k$, we
get that all $U(\eta)$, with $\eta\in \Tau_K$, are disjoint from
$X_k$. Hence $U'$ is of the form $U(\eta_1)\cup U(\sigma) \cup
U(\eta_2)$ and $U'\cap X_k\subset U(\sigma)$, for some $\sigma\in
\Sigma_K$.

We now prove that actually we have the equality $U'\cap X_k=
U(\sigma)$. By property (Subdivision), $U(\sigma)$ is contained in
some $U(\tau)$, where $\tau\in \Tau_k\cup\Sigma_k$. Since
$\lambda\in U(\sigma)\subset U(\tau)$ and $\lambda\in X_k$, we have
$\tau\in \Sigma_k$. Hence $U(\sigma)\subset X_k$. Summarizing, the
restriction of $U'$ to $X_k$ equals $U(\sigma)$. By Lemma~\ref{key
lemma}(iii), $U(\sigma)$ is closed in $\EL$, hence it is also closed
in $X_k$. Moreover, $U(\sigma)$ is also open in $X_k$, since its
complement is a finite union of some disjoint closed $U(\sigma')$
with $\sigma'\in \Sigma_K$. Thus the boundary of $U(\sigma)$ in
$X_k$ is empty, as desired.

\medskip

In order to finish the proof of Theorem~\ref{main}, it remains to
prove the following, which we do in Section~\ref{section
Universality}.

\begin{prop}
\label{main proposition}
 $\EL(S_{0,5})$ satisfies the locally finite
$1$--discs property.
\end{prop}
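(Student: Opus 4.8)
The plan is to verify the locally finite $1$--discs property directly, using the neighbourhood basis $\mathcal{U}(\S)$ from a good partition sequence together with Gabai's machinery from Section~\ref{section Almost filling paths}. Given maps $f_n\colon I\to \EL$ and an open cover $\mathcal{U}$, the first step is to reduce to a cover by basis elements from $\mathcal{U}(\S)$: by paracompactness of the Polish space $\EL$ we may pass to a locally finite refinement $\mathcal{W}$ consisting of sets in $\mathcal{U}(\S)$, and it suffices to produce $g_n$ that are $\mathcal{W}$--close to $f_n$ and locally finite. Since each $f_n(I)$ is compact, it meets only finitely many members of $\mathcal{W}$; the standard device is to subdivide $I$ so that each subinterval is mapped by $f_n$ into a single $W\in\mathcal{W}$, and to perturb $f_n$ on each subinterval separately, matching the perturbations up at the (finitely many) subdivision points, which we first move slightly so that their images are generic (in particular lie in the open pieces $U(\eta)$ rather than on any $U(\sigma)$).

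The core of the argument is the single-subinterval perturbation. Here I would take a path $f\colon I\to \EL$ with image in some $U\in\mathcal{U}(\S)$ and, given a finer target neighbourhood, replace it by a path $g$ that is close to $f$, has the same (generic) endpoints, and whose image is ``thin'' in a quantified sense. The mechanism is: lift $f$ to a path $\hat f$ in $\PML$ via a continuous section over the relevant neighbourhood (using that $\phi$ restricted to $V(\eta)\cap\MPML$ behaves well), apply Lemma~\ref{existence of pl filling paths} to approximate $\hat f$ by a PL almost filling path $h$, then apply Lemma~\ref{Gabai} to obtain a path $g\colon I\to\EL$ with $g(0)=\phi(h(0))$, $g(1)=\phi(h(1))$ and the property that every $g(t)$ is approximated, up to an $\e$--neighbourhood in the projective tangent bundle and up to a small time-shift, by $h(s)^*$ for nearby $s$. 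Lemma~\ref{local perturbing} then converts this geometric closeness into the statement that $g(t)\in U$ for all $t$ (part (i)) and, crucially, that the preimages $\phi^{-1}(g(t))$ stay inside $\Psi(U)\subset\PML$ (part (ii)), so that $g$ stays inside the prescribed basis neighbourhood.

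To get the \emph{local finiteness} of the resulting family $\{g_n\}$ — property (i) of the locally finite $1$--discs property — the plan is to arrange, during the perturbation, that $g_n(I)$ is contained in a single small basis element $U_n\in\mathcal{U}(\S)$ whose ``size'' shrinks as $n\to\infty$, exploiting that we only need $g_n$ to be $\mathcal{W}$--close to $f_n$ and that we may choose $U_n$ arbitrarily fine. Combined with local finiteness of $\mathcal{W}$ and the fact that each point of $\EL$ has a basis neighbourhood meeting only finitely many $W\in\mathcal{W}$, this yields: each $\lambda\in\EL$ has a neighbourhood disjoint from $g_n(I)$ for all large $n$. One must be slightly careful that shrinking $U_n$ is compatible with keeping the endpoints of the subintervals fixed and matching across subdivision points; the endpoints lie in the open sets $U(\eta)$, and near such a point $\phi$ is a local homeomorphism onto an open subset of $\PML$, so small adjustments are harmless.

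I expect the main obstacle to be the gluing and bookkeeping rather than any single deep input: ensuring that the perturbations on adjacent subintervals agree at subdivision points while simultaneously (a) keeping each perturbed subpath inside one fine basis neighbourhood, (b) keeping the whole concatenation $\mathcal{W}$--close to $f_n$, and (c) making the neighbourhoods $U_n$ shrink with $n$. The genuinely substantial analytic content is entirely packaged in Lemmas~\ref{Gabai} and~\ref{local perturbing} (Gabai's path construction and the control of preimages in $\PML$); the remaining work is to assemble these into a global statement, which is where care with the combinatorics of the partition sequence $\S$ — in particular the structure of $\mathcal{U}(\S)$ as triples $U(\eta_1)\cup U(\sigma)\cup U(\eta_2)$ with $U(\sigma)$ closed — is needed so that moving endpoints off the $U(\sigma)$'s is legitimate and the section of $\phi$ used in the lift exists.
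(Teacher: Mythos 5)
Your high-level ingredients are the right ones (a good partition sequence $\S$, Gabai's Lemma~\ref{Gabai}, the control-of-preimages Lemma~\ref{local perturbing}, subdivision of $I$ into pieces confined to basis blocks), and your treatment of the \emph{approximation} half is essentially the paper's. But your plan for \emph{local finiteness} contains a genuine gap, and it is exactly the part where the paper introduces its key idea.

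You propose to achieve local finiteness by confining each $g_n(I)$ to a ``single small basis element $U_n$ whose size shrinks as $n\to\infty$.'' This cannot work, and the constant family $f_n\equiv\lambda_0$ already shows why. Being $\mathcal{W}$--close to $f_n$ forces every $g_n(t)$ to lie in some $W\ni\lambda_0$, so $g_n(I)$ lives in the (fixed) star of $\lambda_0$ in $\mathcal{W}$. If in addition you force $g_n(I)$ into a shrinking basis element, that element must itself be a small neighbourhood of $\lambda_0$, so $g_n\to\lambda_0$ uniformly --- the exact opposite of what local finiteness demands, namely that $g_n(I)$ eventually \emph{leaves} a fixed neighbourhood of $\lambda_0$. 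More generally, local finiteness of the \emph{family} $\{g_n\}$ is not a smallness condition on the individual images; it requires pushing the $g_n$ progressively toward some closed, nowhere dense set. The paper does this by constructing an ``attracting grid'' $\Gamma\subset\PML$: the complement of $\bigcup\mathcal{V}'$ where $\mathcal{V}'=\Psi(\mathcal{U}')$ and $\mathcal{U}'$ is a refinement of $\mathcal{U}$, so $\Gamma$ is closed and disjoint from $\PEML$. Claim~1 ($\partial V(\sigma)\subset\Gamma$ for $\sigma\in\Sigma'$, and $\partial V(\eta)\cap\Gamma$ connected for $\eta\in\Tau'$) gives the path-connected ``grid'' inside each block, and the paths $h^{I_k},h^{J_k}$ are chosen inside $N_{1/n}(\Gamma)$, with Gabai's $\e$ taken $\leq 1/n$; Lemma~\ref{local perturbing}(ii) then converts ``$h$ near $\Gamma$'' into ``$g_n$ avoids a fixed $U'$ for large $n$.'' None of this is derivable from your shrinking-$U_n$ mechanism, so the attraction-to-$\Gamma$ idea is an essential missing step, not bookkeeping.

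A smaller but real issue: you propose to lift $f_n$ to $\PML$ via a ``continuous section'' of $\phi$, asserting that $\phi$ is a local homeomorphism near generic points. That is not available: the fibers $\phi^{-1}(\lambda)$ are simplices of projective measures (typically positive-dimensional on $\Sp$), so $\phi\colon\PEML\to\EL$ is a genuine quotient and has no canonical local section. The paper sidesteps this entirely --- it never lifts $f_n$. It only records the combinatorial itinerary of $f_n$ through vertex and edge blocks (Claim~2) and then builds $g_n$ \emph{ab initio} in $\PML$ near $\Gamma$, using only that itinerary to guarantee $\mathcal{U}$--closeness. This is also why the paper does not need to match the endpoints of $g_n$ to those of $f_n$, avoiding your gluing concerns altogether.
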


\section{Universality}
\label{section Universality} In this section we prove Proposition
\ref{main proposition}, which completes the proof of Theorem
\ref{main}.

\medskip

We have to prove that for any family of paths $f_n \colon I
\rightarrow \EL$, where $n\in \N$, and any open cover $\mathcal{U}$
of $\EL$, there are paths $g_n\colon I \rightarrow \EL$ such that
\begin{description}
\item[(Local finiteness)]
 for each $\lambda\in \EL$ there is a neighbourhood $U\ni \lambda$
 satisfying
$g_n(I)\cap U=\emptyset$ for sufficiently large $n$,
\item[(Approximation)]
for each $t\in I, n\in \N,$ there is $U\in \mathcal{U}$ such that
both $f_n(t)$ and $g_n(t)$ lie in $U$.
\end{description}

Because the proof is technically involved, as an illustration we
prove the following.

\begin{prop} The N\"obeling curve $N^3_1\subset \R^3$ satisfies the locally finite $1$--discs
property.
\end{prop}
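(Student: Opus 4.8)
The plan is to reduce the statement for $N^3_1$ to a local, cube-by-cube perturbation. Write $\R^3 = \bigcup_{Q} Q$ as a union of a locally finite family of closed axis-parallel cubes with rational vertices (a dyadic grid), and note that $N^3_1 \cap Q$ is itself a copy of the N\"obeling curve inside a cube, with a product structure transverse to any coordinate hyperplane. Given the family $f_n \colon I \to N^3_1$ and the open cover $\mathcal U$, first I would pass to a common refinement so that $\mathcal U$ is refined by the family of (relatively open) cubes $Q \cap N^3_1$ of some fixed small mesh $2^{-m}$; by uniform continuity of each $f_n$ on the compact interval $I$, I subdivide $I$ into finitely many subintervals each of whose image lies in a single such cube. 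It then suffices to perturb each $f_n$ on each such subinterval within the corresponding cube, matching up continuously at the (finitely many) subdivision points, which one arranges by leaving $f_n$ fixed at those points and perturbing only in the interior.

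The heart of the matter is the single-cube statement: given countably many paths into $N^3_1 \cap Q$ (after reindexing, only those $n$ with $f_n$ actually meeting $Q$), push each one to a nearby path whose images are locally finite in $N^3_1$. Here is where I use that $N^3_1$ is obtained from $\R^3$ by deleting points with two rational coordinates: a generic affine plane $\{x_3 = c\}$ with $c$ irrational meets $N^3_1$ in a copy of $\{(x_1,x_2) : \text{not both rational}\}$, which is a $2$--dimensional object, but more usefully, a generic affine \emph{line} $\ell$ with two irrational ``free'' coordinate-values lies entirely in $N^3_1$. So the strategy inside $Q$ is: choose for the $n$-th path a level $c_n \to c_\infty$ of irrationals, all distinct, converging to the (irrational) level determined by where we want the perturbed path to sit, and replace $f_n$ by a path $g_n$ that first moves (within a tiny ball, staying in $N^3_1$) into the slice at height $c_n$, runs along a polygonal path in that slice shadowing the $x_1,x_2$-coordinates of $f_n$, and moves back. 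Because the $c_n$ are distinct and converge, the slices $\{x_3 = c_n\}$ are pairwise disjoint and accumulate only on $\{x_3 = c_\infty\}$; choosing the ``moving in/out'' portions to also have $x_3$-coordinate in disjoint shrinking intervals around $c_n$, the images $g_n(I)$ become a locally finite (indeed discrete) family away from finitely many accumulation slices, and one finishes by an induction on the finitely many cubes handling those accumulation points.

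I would organize the local finiteness bookkeeping by noting that, since the grid is locally finite, only finitely many cubes $Q$ are relevant ``near'' any given point, and within each cube the disjointness of the chosen irrational levels gives local finiteness there; a point $x \in N^3_1$ with $x_3 = c_\infty^{(Q)}$ for some $Q$ is the only kind of point where accumulation can occur, and there one repeats the construction one dimension down (perturbing the $x_1$-coordinate by distinct irrationals) to separate the finitely many offending paths. The approximation clause is automatic: every perturbation takes place inside a single cube of mesh $2^{-m}$, hence inside one element of the refinement of $\mathcal U$, so $f_n(t)$ and $g_n(t)$ always lie in a common $U$. The main obstacle I anticipate is purely the careful choice of the shrinking nested intervals of irrational levels so that (a) each $g_n$ stays in $N^3_1$ throughout, including the transition segments, (b) the images are genuinely locally finite rather than merely having distinct levels, and (c) the endpoint-matching across the subdivision of $I$ is continuous; all three are elementary but require writing the piecewise-linear paths explicitly and tracking which coordinates are kept irrational on which segment.
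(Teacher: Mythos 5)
There is a genuine gap, and it lies exactly in the part your proposal treats as ``elementary bookkeeping'': the local finiteness. Two concrete problems.

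First, you declare that you will fix $g_n$ at the subdivision points of $I$ (``leaving $f_n$ fixed at those points and perturbing only in the interior''). This already kills local finiteness. Take all $f_n$ to be the constant path at a single point $\lambda_0\in N^3_1$. Your construction then forces every $g_n$ to pass through $\lambda_0$, so no neighbourhood of $\lambda_0$ misses the tail of the family. To satisfy local finiteness you \emph{must} move the endpoints of $g_n$ as well, and this must be organised so that the endpoints of consecutive subintervals still match; this is precisely the rather delicate ``choose $p_k,q_k$'' bookkeeping that the paper's proof carries out.

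Second, and more fundamentally, pushing the $n$-th path into a slice $\{x_3=c_n\}$ at a \emph{distinct irrational} level $c_n\to c_\infty$ does not give local finiteness, because these slices accumulate at $\{x_3=c_\infty\}$, which still meets $N^3_1$ in a $2$--dimensional set (the set $(\R^2\setminus\Q^2)\times\{c_\infty\}$). Every point of that set is a point of $N^3_1$ near which infinitely many $g_n$ pass. Your proposed remedy, ``repeat one dimension down,'' is not spelled out and runs into the same obstruction: if you further force $g_n$ near $\{x_1=d_n\}$ with $d_n\to d_\infty$, the limit line $\{x_1=d_\infty,\,x_3=c_\infty\}$ is \emph{still contained in} $N^3_1$ whenever $c_\infty,d_\infty$ are irrational. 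The only way to make the limit set disjoint from $N^3_1$ is to send the $g_n$ towards a set on which at least two coordinates are rational --- but then you are no longer in the ``distinct irrational levels'' framework at all, and separating infinitely many paths along rational levels cannot by itself give local finiteness, because a neighbourhood of a rational level contains infinitely many such levels.

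What your proposal is missing is the single organising idea in the paper's argument: fix once and for all a \emph{closed set $\Gamma$ disjoint from $N^3_1$} (concretely, the union of $1$--skeleta of all $m$--diadic cubes, i.e.\ axis-parallel lines with two rational coordinates), and push the \emph{entire} image of $g_n$, endpoints included, into $N_{1/n}(\Gamma)$. Since $\Gamma\cap N^3_1=\emptyset$ and $\Gamma$ is closed, each $\lambda\in N^3_1$ has a ball disjoint from $\Gamma$, hence from $N_{1/n}(\Gamma)$ for large $n$, and local finiteness is immediate. The paths exist because a small neighbourhood of the $1$--skeleton of a cube is path connected, and a small neighbourhood of the square loop bounding the common face of two adjacent cubes is path connected; one then perturbs generically into $N^3_1$. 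Your decomposition of $I$ into subintervals mapped into single cubes or adjacent pairs of cubes is the same as in the paper and is the right first step; the part that needs to be replaced is the ``distinct irrational slices'' mechanism and the decision to pin $g_n$ to $f_n$ at the subdivision points.
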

\proof We learned the idea of this proof from Andrzej Nag\'orko.

We say that a cube $I_1\times I_2\times I_3\subset \R^3$ is
\emph{$m$--diadic} if the lengths of all $I_i$ equal $\frac{1}{2^m}$
and the endpoints of $I_i$ lie in $\frac{1}{2^m}\Z$.

Assume first, for simplicity, that we are in the special case where
there is $m>0$ such that the open cover $\mathcal{U}$ consists of
the interiors of unions of pairs of adjacent $m$--diadic cubes. Let
$\Gamma\subset \R^3$ be the closed set which is the union of all
lines parallel to coordinate axes with the fixed coordinate in
$\frac{1}{2^m}\Z$. In other words $\Gamma$ is the union of
$1$--skeleta of all $m$--diadic cubes. Observe that $\Gamma$ is
disjoint from $N^3_1$.

Let $f_n \colon I \rightarrow N^3_1$ be the family of paths which we
want to approximate. We describe the construction of $g_n$ for a
fixed $n\in \N$.

There is a partition $J_1\cup I_1\cup\ldots \cup I_{l-1}\cup J_l$ of
$I$ into closed intervals with disjoint interiors, satisfying the
following. There are $m$--diadic open cubes $V_k$, where $0\leq
k\leq l$, satisfying $f_n(I_k)\subset V_k$ and $f_n(J_k)\subset
\mathrm{int}\, \overline{V_{k-1}\cup V_k}$. Denote the endpoints of
$J_k$ by $s_k,t_k$. We can assume $l\geq 1$.

For each pair of adjacent cubes $V_{k-1}, V_k$ we denote by
$\Gamma_k\subset \Gamma$ the square loop which is the intersection
of $\Gamma$ with $\partial V_{k-1}\cap \partial V_k$. For $A\subset
\R^3$ and $\delta>0$ we denote by $N_\delta(A)$ the open
$\delta$--neighbourhood of $A$ in $\R^3$.

For each $1< k\leq l$ we choose some $p_k\in
V_{k-1}\cap N_\frac{1}{n}(\Gamma_k)$ satisfying $p_k\in N^3_1$. We
put $g_n(s_k)=p_k$. Analogously, for each $1\leq k< l$ we choose
some $q_k\in V_k\cap N_\frac{1}{n}(\Gamma_k)$
satisfying $q_k\in N^3_1$. We put $g_n(t_k)=q_k$. We also put
$g_n(0)=q_1$ and $g_n(1)=p_l$.

For $0<k<l$ we choose paths $h^{I_k}$ between $q_k$ and $p_{k+1}$ in
the open sets $V_k\cap N_\frac{1}{n}(\Gamma)$. This is possible,
since the latter sets are neighborhoods of $1$--skeleta of $V_k$,
hence they are path connected. We define the path $g_n$ on $I_k$ by
slightly perturbing $h^{I_k}$ relative the endpoints so that we
obtain paths in $N^3_1$.

Similarly, for $1\leq k\leq l$ we choose paths $h^{J_k}$ between
$p_k$ and $q_k$ in the open sets $\mathrm{int}\,
\overline{V_{k-1}\cup V_k}\cap N_\frac{1}{n}(\Gamma_k)$. The latter
sets are path connected because $\Gamma_k$ are $1$--spheres. We
define the path $g_n$ on $J_k$ by slightly perturbing $h^{J_k}$
relative the endpoints so that we obtain paths in $N^3_1$.

By construction, paths $g_n$ are $\mathcal{U}$--close to $f_n$,
which means that they satisfy property (Approximation). Moreover,
for each $n$ the image of the path $g_n$ is contained in
$N_\frac{1}{n}(\Gamma)$, where $\Gamma$ is a closed set disjoint
from $N^3_1$. This yields property (Local finiteness). This ends the
proof in the case of special $\mathcal{U}$.

\medskip

In general, we may only assume that $\mathcal{U}$ consists of the
interiors of unions of pairs of adjacent $m$--diadic cubes without
the assumption that $m$ is fixed. In other words the cubes might be
arbitrarily small. However, we can at least assume that no element
of $\mathcal{U}$ is properly contained in another one. We also note
the property that if two open diadic cubes intersect, then one of
them is contained in the other. We define (the ``attracting
grid'') $\Gamma$ as the complement in $\R^3$ of the union of all
elements of $\mathcal{U}$.

\medskip\par\noindent\textbf{Claim.}\ignorespaces
\
\begin{enumerate}[(i)]
\item
For each pair of adjacent cubes $V_1,V_2$ with $\mathrm{int} \,
\overline{V_1\cup V_2}\in \mathcal{U}$, the square loop which is the
boundary of the common face of $V_1$ and $V_2$ is contained in
$\Gamma$.

\item
Let $V$ be a maximal (open) cube among all cube pairs from
$\mathcal{U}$. Then $\partial V\cap \Gamma$ is connected (and
non-empty).
\end{enumerate}

Assertion (i) of the claim follows directly from the maximality
assumption on the elements of $\mathcal{U}$. For assertion (ii)
observe first that $\partial V$ is a $2$--sphere. We obtain
$\partial V\cap \Gamma$ by removing from $\partial V$ the
intersections with elements of $\mathcal{U}$. By maximality
assumption on $V$, these elements have the form $\mathrm{int} \,
\overline{V_1\cup V_2}$, where $V_1\subset V$ and $V_2\subset
\R^3\setminus V$. Each intersection of such a set with $\partial V$
is an open $2$--disc. By the maximality assumption on the elements
of $\mathcal{U}$, all those $2$--discs are disjoint. Hence $\partial
V\cap \Gamma$ is obtained from a $2$--sphere by removing a disjoint
union of open $2$--discs, which yields assertion (ii).

\medskip

We leave it to the reader to verify that the claim allows to perform
the same argument as in the special case. \qed

\medskip
We are now prepared for the following.

\medskip\par\noindent\textbf{Proof of Proposition~\ref{main proposition}.}\ignorespaces
\ By Theorem~\ref{tracks form basis}, we may assume that
$\mathcal{U}\subset \mathcal{U}(\S)$, where $\mathcal{U}(\S)$ is the
open neighbourhood basis coming from some fixed good partition
sequence $\S=((\Tau_k, \Sigma_k))_k$ (see Definition~\ref{partition
sequence} and Corollary~\ref{properties of Us}). Let
$\mathcal{U}'\subset \mathcal{U}(\S)$ be an open cover which is a
refinement of $\mathcal{U}$ satisfying the assertion of Lemma
\ref{local perturbing}(i). In other words, we require that for any
$U'\in \mathcal{U}'$, there is $U=U(U')\in \mathcal{U}$ and
$\e=\e(U')>0$, so that we have the following. For any $\mu\in
V'=\Psi(U')$ with $\phi(\mu)\in \AML$, if $\lambda\in \EL$ and
$\phi(\mu)^*\in N^{PT}_\e(\tilde{\lambda})$ for some diagonal
extension $\tilde{\lambda}$ of $\lambda$, then $\lambda\in U$.
Without loss of generality we may assume that whenever $U(\eta_1)
\cup U(\sigma) \cup V(\eta_2)$ belongs to $\mathcal{U}'$, then also
$U(\eta_1)$ and $U(\eta_2)$ belong to $\mathcal{U}'$.

We say that a train track $\tau\in \Tau_k\cup \Sigma_k$
\emph{participates in} $\mathcal{U}'$, if $U(\tau)\in \mathcal{U}'$
or $\tau$ equals $\sigma$ for $U(\eta_1)\cup U(\sigma)\cup
U(\eta_2)\in \mathcal{U}'$. Let $\Tau'\subset \bigcup_k\Tau_k$ be
the family of all complete train tracks $\eta$ participating in
$\mathcal{U}'$ with maximal $V(\eta)$ with respect to inclusion. In
other words, we take all complete train tracks participating in
$\mathcal{U}'$ and remove those $\eta$ whose $V(\eta)$ is properly
contained in some $V(\eta')$, where $\eta'$ is also participating in
$\mathcal{U}'$. Note that by property (Subdivision) $V(\eta)$ and
$V(\eta')$ can only intersect if one is contained in the other. We
denote the family of $U(\eta)$ over $\eta\in \Tau'$ by
$\mathcal{U}(\Tau')$. Denote
$\mathcal{V}(\Tau')=\Psi(\mathcal{U}(\Tau'))$. Since $V(\eta)$ were
required to be maximal, the elements of $\mathcal{V}(\Tau')$ are
pairwise disjoint. Hence the elements of $\mathcal{U}(\Tau')$ are
also pairwise disjoint.

Let $\Sigma'\subset \bigcup_k\Sigma_k$ be the family of all nearly
complete train tracks $\sigma$ participating in $\mathcal{U}'$ with
maximal $V(\sigma)$ with respect to inclusion, among all $V(\tau)$ with
$\tau$ participating in $\mathcal{U}'$. We denote the family of
$U(\sigma)$ over $\sigma\in \Sigma'$ by $\mathcal{U}(\Sigma')$ and
we put $\mathcal{V}(\Sigma')=\Psi(\mathcal{U}(\Sigma'))$. The
elements of $\mathcal{V}(\Sigma')$ are pairwise disjoint and
disjoint from the elements of $\mathcal{V}(\Tau')$. Hence the
elements of $\mathcal{U}(\Sigma')$ are also pairwise disjoint and
disjoint from the elements of $\mathcal{U}(\Tau')$.

Let $\Gamma\subset \PML$ be the closed set which is the complement
of the union of all sets in $\mathcal{V}'=\Psi(\mathcal{U}')$. We have
$\Gamma \cap \PEML=\emptyset$.

\medskip\par\noindent\textbf{Claim 1.}\ignorespaces
\
\begin{enumerate}[(i)]
\item
For any $\sigma\in \Sigma'$, we have $\partial V(\sigma)\subset
\Gamma$.
\item
For any $\eta\in\Tau'$ the set $\partial V(\eta) \cap \Gamma$ is
connected and non-empty.
\end{enumerate}

\medskip\par\noindent\textbf{Proof of Claim 1.}\ignorespaces
\begin{enumerate}[(i)]
\item Let $\mu\in \partial V(\sigma)$. If
$\mu\notin \Gamma$, then there is $V'\in \mathcal{V}'$ with $\mu\in
V'$. Since $V'$ is open in $\PML$, it intersects $V(\sigma)$. The
set $V'$ is of the form $V'=V(\eta)$ or $V'=V(\eta_1)\cup
V(\sigma')\cup V(\eta_2)$. Thus $V(\sigma)$ intersects $V(\tau)$,
for $\tau$ equal to one of $\eta, \eta_i,\sigma'$. Since $\sigma\in
\Sigma'$, we have $V(\tau)\subset V(\sigma)$. Hence $\tau$ is a
nearly complete train track, and therefore $V'=V(\eta_1)\cup
V(\sigma')\cup V(\eta_2)$ where $\sigma'$ is equal to $\tau$. Since
$\sigma\in\Sigma'$, we have $V(\sigma)\supset V(\sigma')$. By
hypothesis $\mu$ is outside of $V(\sigma)$, hence it lies in
$V(\eta_i)$ for some $i$. But then $V(\eta_i)$ intersects
$V(\sigma)$ and like before we get $V(\eta_i)\subset V(\sigma)$,
which is a contradiction.

\item First note that $\partial V(\eta)$ is a
$2$--sphere. $\partial V(\eta)$ is disjoint from any $V(\eta')$, for
$\eta'$ participating in $\mathcal{U'}$: otherwise $V(\eta')$
intersects $V(\eta)$ and by maximality of $V(\eta)$ (since $\eta\in
\Tau'$) we have $V(\eta')\subset V(\eta)$, which is a contradiction.
Hence, if for some $V'\in \mathcal{V}'$ the intersection $\partial
V(\eta)\cap V'$ is non-empty, then $V'=V(\eta_1)\cup V(\sigma)\cup
V(\eta_2)$, where $\sigma\in \Sigma'$, and $\partial V(\eta)\cap
V'\subset V(\sigma)$. Moreover, since $V'$ is open, we have that
$V(\sigma)$ or one of $V(\eta_i)$ intersects $V(\eta)$. Since
$\sigma\in \Sigma'$, this must be one of $V(\eta_i)$.  Because
$\eta\in \Tau'$, we then have $V(\eta_i)\subset V(\eta)$. In
particular, $P(\sigma) \subset
\partial V(\eta_i) \subset P(\eta)$. Again, since $\sigma\in
\Sigma'$ we have $P(\sigma)\subset \partial V(\eta)$. Summarizing,
for any $V'\in \mathcal{V}'$ we have $\partial V(\eta)\cap
V'=V(\sigma)$, for some $\sigma\in \Sigma'$, which is an open
$2$--disc. Hence $\partial V(\eta)\cap \Gamma$ is obtained from the
$2$--sphere $\partial V(\eta)$ by removing a (possibly infinite)
union of disjoint open $2$--discs.
\end{enumerate}
\qed
\medskip

Let $f_n\colon I\rightarrow \EL$, where $n\in \N$, be the family of
paths which we want to approximate. We now independently construct
the paths $g_n$. To this end, we fix $n\in\N$ and note the
following.

\medskip\par\noindent\textbf{Claim 2.}\ignorespaces
\ There is a partition $I_0\cup J_1\cup I_1\cup\ldots \cup J_l\cup I_l$ of $I$
into closed intervals with disjoint interiors, with possibly empty
$I_0, I_l$, satisfying the following (see Figure~\ref{fig:path}).
\begin{itemize}
\item
$f_n(I_k)\subset U'_k$, for some $U'_k\in \mathcal{U}(\Tau')\subset
\mathcal{U}'$, if $I_k$ is non-empty, where $0\leq k\leq l$.

\item
$f_n(J_k)\subset \hat{U}_k$, where $\hat{U}_k=U(\eta^1_k)\cup
U(\sigma_k)\cup U(\eta^2_k)\in \mathcal{U'}$ with $\sigma_k\in
\Sigma'$. Moreover, for $j=k-1,k$ there is $i$ such that
$U(\eta^i_k)\subset U'_j$, if $I_j$ is non-empty, where $1\leq k\leq
l$.
\end{itemize}

\medskip\par\noindent\textbf{Proof of Claim 2.}\ignorespaces
\ For the proof it is convenient to introduce the following
terminology. We call an element $U$ of $\mathcal{U}(\S)$ a
\emph{vertex block} if it is of the form $U(\eta)$ for some complete
train track $\eta$. We call the other elements of $\mathcal{U}(\S)$
\emph{edge blocks}.
\begin{figure}[htbp!]
 \centering
  \scalebox{0.75}{
   \psfrag{v1}{$V'_0$}
   \psfrag{v2}{$V'_1$}
   \psfrag{v3}{$V'_2$}
   \psfrag{v4}{$V'_3$}
   \psfrag{v5}{$V'_4$}
   \psfrag{w1}{$\hat{V}_1$}
   \psfrag{w2}{$\hat{V}_2$}
   \psfrag{w3}{$\hat{V}_3$}
   \psfrag{w4}{$\hat{V}_4$}
  \includegraphics[width=\textwidth]{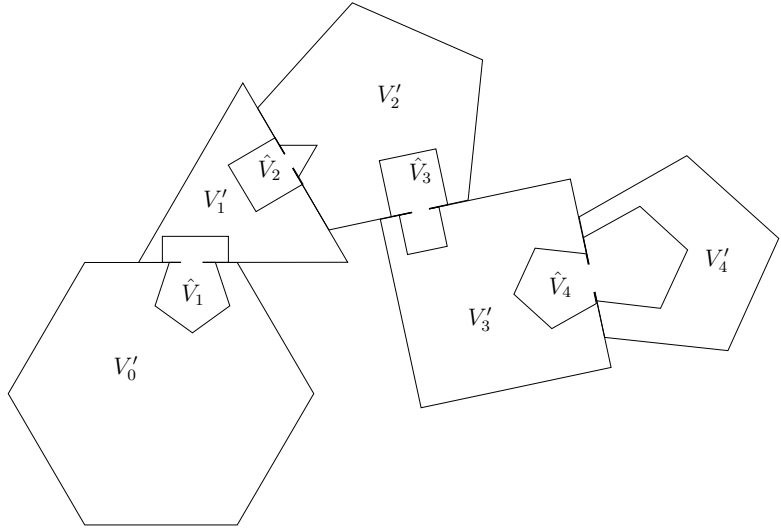} }
  \caption{Combinatorics of vertex blocks and edge blocks.}
  \label{fig:path}
\end{figure}

Consider now the family $\mathcal{Y}$, which is the union of all
vertex blocks in $\mathcal{U}(\Tau')\subset \mathcal{U}'$ and edge
blocks $U(\eta^1)\cup U(\sigma)\cup U(\eta^2)\in \mathcal{U'}$ with
$\sigma\in \Sigma'$, where we pick only one such set for each
$\sigma$. Observe that
$\mathcal{Y}$ forms an open cover of $\EL$. By compactness of $I$, a
finite subset of $\mathcal{Y}$ covers $f_n(I)$. In particular, there
is a partition $\mathcal{I}$ of $I$ into finitely many nontrivial
closed intervals with disjoint interiors so that each interval is
mapped into a set of $\mathcal{Y}$. Observe that two consecutive
intervals in $\mathcal{I}$ cannot be mapped into different vertex
blocks, since the latter are disjoint. Moreover, if two consecutive
intervals $I_-, I_+\in \mathcal{I}$ are mapped into edge blocks
$U(\eta_-^1)\cup U(\sigma_-)\cup U(\eta_-^2),\ U(\eta_+^1)\cup
U(\sigma_+)\cup U(\eta_+^2)\in \mathcal{U'}$, then we have the
following. Since $\sigma_-\neq \sigma_+$, we have that $f_n(I_-\cap
I_+)$ lies in, say, $U(\eta_-^1)\cap U(\eta_+^1)$, hence
$U(\eta_-^1)$ and $U(\eta_+^1)$ are contained in the same vertex
block $U'\in \mathcal{U}(\Tau')$. We can then represent $I_-\cup
I_+=I_-'\cup J\cup I_+'$ with $I_-'\subset I_-, \ I_+'\subset I_+$
and $f_n(J)\subset U'$, where $J$ is nontrivial. To conclude this
discussion, we can assume that the intervals in $\mathcal{I}$ are
mapped alternatively into vertex blocks and edge blocks of
$\mathcal{Y}$. Furthermore, observe that for each pair of
consecutive intervals in $\mathcal{I}$ mapped by $f_n$ to $U(\eta)
\in \mathcal{Y}$ and $U(\eta^1)\cup U(\sigma)\cup U(\eta^2) \in
\mathcal{Y}$ there is some $i$ with $U(\eta^i)\subset U(\eta)$. This
gives rise to $I_k,J_k$ as required.\qed

\medskip
From now on we fix the objects and the notation as in Claim 2.
Before we describe the construction of the path $g_n$, note that to
guarantee property (Approximation) it suffices that $g_n$ satisfies the following
two properties.
\begin{description}
\item[(Approximation i)]
For each $0\leq k\leq l$ such that $I_k$ is non-empty we have
$g_n(I_k)\subset U(U'_k)$.

\item[(Approximation ii)]
For each $1\leq k\leq l$ we have $g_n(J_k)\subset U(\hat{U}_k)$.
\end{description}
\begin{figure}[htbp!]
 \centering
  \scalebox{0.6}{
   \psfrag{g}{$N_{\frac{1}{n}}(\Gamma)$}
   \psfrag{pi}{$p_k$}
   \psfrag{qi}{$q_k$}
   \psfrag{qii}{$p_{k+1}$}
   \psfrag{hj}{$h^{J_k}$}
   \psfrag{hi}{$h^{I_k}$}
  \includegraphics[width=\textwidth]{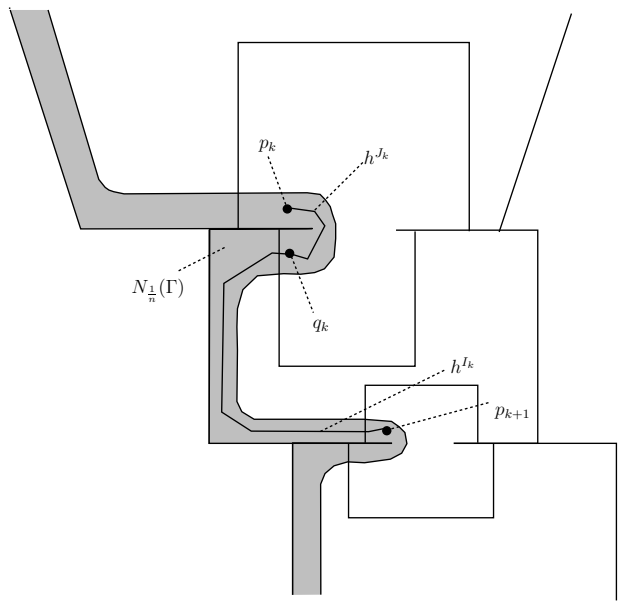} }
  \caption{Construction of approximating paths attracted by $\Gamma$.}
  \label{fig:gamma}
\end{figure}

\medskip
At this point we can finally define the path $g_n$. Denote
$V'_k=\Psi(U'_k)$ and $\hat{V}_k=\Psi(\hat{U}_k)$. If $l=0$, we
choose any point $p\in V'_0\cap N_ \frac{1}{n}(\Gamma)$ satisfying
$\phi(p)\in \EL$. This is possible since the open set $V'_0\cap N_
\frac{1}{n}(\Gamma)$ is non-empty by Claim 1(ii) and $\MPML$ is
dense in $\PML$. We put $g_n(I) \equiv p$, which obviously satisfies
properties (Approximation i) and (Approximation ii).

From now on we assume $l\geq 1$. Denote the endpoints of $J_k$ by
$s_k,t_k$. First we claim that for any $1\leq k \leq l$ with $s_k
\neq 0$ the open set $V'_{k-1}\cap \hat{V}_k\cap
N_\frac{1}{n}(\partial V(\sigma_k))$ is non-empty. Indeed, there is
an $i$ satisfying $V(\eta_k^i) \subset V'_{k-1}$. Then we have
$\partial V(\sigma_k) \subset \partial V(\eta_k^i) \subset
\overline{V}_{k-1}'$. Hence $\partial V(\sigma_k)$ is contained in
the closure of $V'_{k-1}\cap \hat{V}_k = V(\eta^i_k)$ and the claim
follows.

Thus for each $1\leq k\leq l$ with $s_k\neq 0$ we can choose some
$p_k\in V'_{k-1}\cap \hat{V}_k\cap N_\frac{1}{n}(\partial
V(\sigma_k))$ satisfying $\phi(p_k)\in \EL$. We put
$g_n(s_k)=\phi(p_k)$. Analogously, for each $1\leq k\leq l$ with
$t_k\neq 1$ we choose some $q_k\in V'_k\cap \hat{V}_k\cap
N_\frac{1}{n}(\partial V(\sigma_k))$ satisfying $\phi(q_k)\in \EL$.
We put $g_n(t_k)=\phi(q_k)$. By Claim 1(i) all $p_k,q_k$ lie in
$N_\frac{1}{n}(\Gamma)$.

If $s_1=0$ (i.e.\ if $I_0$ is empty), then we put $g_n(0)=q_1$,
otherwise we put $g_n(I_0)\equiv p_1$. Analogously, if $t_l=1$
(i.e.\ if $I_l$ is empty), then we put $g_n(1)=p_l$, otherwise we
put $g_n(I_l)\equiv q_l$.

\medskip

For $0<k<l$, we choose some PL almost filling paths $h^{I_k}$ between $q_k$ and $p_{k+1}$ in
the sets $V'_k\cap N_\frac{1}{n}(\Gamma)$ (see Figure~\ref{fig:gamma}).
This is possible, since the latter sets are open and path connected by
Claim 1(ii). To each $h^{I_k}$ we apply Lemma~\ref{Gabai} with $\e =
\e'_n = \min \{\e(U'_k),\ \frac{1}{n}\}$ (and any $\delta$). We obtain
paths $g_n^{I_k}\colon I_k\rightarrow\EL$ with endpoints $\phi(q_k),
\phi(p_{k+1})$, such that the image of $g_n^{I_k}$ lies in
$U(U'_k)$. We define $g_n$ on $I_k$ to be equal to $g_n^{I_k}$,
which gives property (Approximation i).

Similarly, for $1\leq k\leq l$ we choose some PL almost
filling paths $h^{J_k}$ between $p_k$ and $q_k$ in the sets $\hat{V}_k\cap N_\frac{1}{n}(\partial V(\sigma_k))$ (also see Figure~\ref{fig:gamma}).
This is possible since the latter sets are open and path
connected (because $\partial V(\sigma_k)$ are $1$--spheres).
 To each
$h^{J_k}$ we apply Lemma~\ref{Gabai} with $\e = \hat{\e}_n = \min
\{\e(\hat{U}_k),\ \frac{1}{n}\}$. We obtain paths
$g_n^{J_k}\colon J_k\rightarrow\EL$ with endpoints $\phi(p_k),
\phi(q_k)$, such that the image of $g_n^{J_k}$ lies in
$U(\hat{U}_k)$. We define $g_n$ on $J_k$ to be equal $g_n^{J_k}$,
which gives property (Approximation ii).

This concludes the construction of the paths $g_n\colon I
\rightarrow \EL$. By the discussion above they satisfy property
(Approximation).

\medskip

It remains to verify property (Local finiteness). Let $\lambda_0\in
\EL$. Let $V\in \mathcal{V}(\S)$ be a neighbourhood of
$\phi^{-1}(\lambda_0)$ such that its closure is disjoint from
$\Gamma$ (guaranteed by Theorem~\ref{tracks form basis}(Fineness)).
Put $U=\phi(V\cap \MPML)\in \mathcal{U}(\S)$. Let $U'\subset U$ and
$\e>0$ be as in the assertion of Lemma~\ref{local perturbing}(ii)
applied to $U$ and $\lambda_0$. For sufficiently large $n$ we have
that $V$ is outside $N_\frac{1}{n}(\Gamma)$ and $\frac{1}{n}\leq\e$.
Then both $\hat{\e}_n$ and $\e'_n$ are smaller than $\e$, and
therefore the image of $g_n$ is outside $U'$. \qed

\begin{bibdiv}
\begin{biblist}

\bib{BS}{article}{
   author={Bonk, M.},
   author={Schramm, O.},
   title={Embeddings of Gromov hyperbolic spaces},
   journal={Geom. Funct. Anal.},
   volume={10},
   date={2000},
   number={2},
   pages={266--306}
   }

\bib{Bo}{article}{
   author={Bowers, P. L.},
   title={Dense embeddings of sigma-compact, nowhere locally compact metric
   spaces},
   journal={Proc. Amer. Math. Soc.},
   volume={95},
   date={1985},
   number={1},
   pages={123--130}
   }

\bib{BH}{book}{
   author={Bridson, M. R.},
   author={Haefliger, A.},
   title={Metric spaces of non-positive curvature},
   series={Grundlehren der Mathematischen Wissenschaften [Fundamental
   Principles of Mathematical Sciences]},
   volume={319},
   publisher={Springer-Verlag},
   place={Berlin},
   date={1999},
   pages={xxii+643}
   }

\bib{Cu}{article}{
   author={Curtis, D. W.},
   title={Boundary sets in the Hilbert cube},
   journal={Topology Appl.},
   volume={20},
   date={1985},
   number={3},
   pages={201--221}
   }

\bib{Du}{article}{
   author={Dugundji, J.},
   title={Absolute neighborhood retracts and local connectedness in
   arbitrary metric spaces},
   journal={Compositio Math.},
   volume={13},
   date={1958},
   pages={229--246 (1958)}
   }

\bib{Eng}{book}{
   author={Engelking, R.},
   title={Dimension theory},
   note={Translated from the Polish and revised by the author;
   North-Holland Mathematical Library, 19},
   publisher={North-Holland Publishing Co.},
   place={Amsterdam},
   date={1978},
   pages={x+314 pp. (loose errata)}
   }

\bib{G}{article}{
   author={Gabai, D.},
   title={Almost filling laminations and the connectivity of ending
   lamination space},
   journal={Geom. Topol.},
   volume={13},
   date={2009},
   number={2},
   pages={1017--1041}
   }

\bib{Ham}{article}{
   author={Hamenst{\"a}dt, U.},
   title={Train tracks and the Gromov boundary of the complex of curves},
   conference={
      title={Spaces of Kleinian groups},
   },
   book={
      series={London Math. Soc. Lecture Note Ser.},
      volume={329},
      publisher={Cambridge Univ. Press},
      place={Cambridge},
   },
   date={2006},
   pages={187--207}
   }

\bib{Ham2}{article}{
   author={Hamenst{\"a}dt, U.},
   title={Geometric properties of the mapping class group},
   conference={
      title={Problems on mapping class groups and related topics},
   },
   book={
      series={Proc. Sympos. Pure Math.},
      volume={74},
      publisher={Amer. Math. Soc.},
      place={Providence, RI},
   },
   date={2006},
   pages={215--232}
   }

\bib{Ham_MCG1}{article}{
   author={Hamenst{\"a}dt, U.},
   title={Geometry of the mapping class groups. I. Boundary amenability},
   journal={Invent. Math.},
   volume={175},
   date={2009},
   number={3},
   pages={545--609}}

\bib{KLT}{article}{
   author={Kawamura, K.},
   author={Levin, M.},
   author={Tymchatyn, E. D.},
   title={A characterization of 1-dimensional N\"obeling spaces},
   booktitle={Proceedings of the 12th Summer Conference on General Topology
   and its Applications (North Bay, ON, 1997)},
   journal={Topology Proc.},
   volume={22},
   date={1997},
   number={Summer},
   pages={155--174}
   }

\bib{Kla}{article}{
   author={Klarreich, E.},
   title ={The boundary at inifinity of the curve complex and the
   relative Teichm\"uller space},
   eprint={http://www.msri.org/people/members/klarreic/curvecomplex.ps}
   date={1999}
   }

\bib{LS}{article}{
   author={Leininger, C.},
   author={Schleimer, S.},
   title ={Connectivity of the space of ending laminations},
   status={preprint},
   eprint={arXiv:0801.3058},
   date={2008}
   }

\bib{LMS}{article}{
   author={Leininger, C.},
   author={Mj, M.},
   author={Schleimer, S.},
   title ={Universal Cannon--Thurston maps and the boundary of the curve complex},
   status={preprint},
   eprint={arXiv:0808.3521},
   date  ={2008}
   }

\bib{Luo}{article}{
   author={Luo, F.},
   title={Automorphisms of the complex of curves},
   journal={Topology},
   volume={39},
   date={2000},
   number={2},
   pages={283--298}
   }

\bib{MM}{article}{
   author={Masur, H. A.},
   author={Minsky, Y. N.},
   title={Geometry of the complex of curves. I. Hyperbolicity},
   journal={Invent. Math.},
   volume={138},
   date={1999},
   number={1}
   }

\bib{Mos}{article}{
   author={Mosher, L.},
   title ={Train track expansions of measured foliations},
   date={2003},
   status={unpublished manuscript},
   eprint={http://andromeda.rutgers.edu/~mosher/arationality_03_12_28.pdf}
   }

\bib{N}{article}{
   author={Nag\'orko, A.},
   title ={Characterization and topological rigidity of N\"obeling manifolds},
   date={2006},
   status={submitted}
   note={PhD thesis},
   eprint={arXiv:0602574}
   }

\bib{PH}{book}{
   author={Penner, R. C.},
   author={Harer, J. L.},
   title={Combinatorics of train tracks},
   series={Annals of Mathematics Studies},
   volume={125},
   publisher={Princeton University Press},
   place={Princeton, NJ},
   date={1992},
   pages={xii+216}
   }

\bib{Th}{article}{
   author={Thurston, W. P.},
   title ={The geometry and topology of three-manifolds},
   publisher={Princeton Univ. Math. Depr. Lecture Notes},
   date={1980},
   eprint={http:msri.org/publications/books/gt3m/}
   }

\end{biblist}
\end{bibdiv}

\end{document}